\DeclareMathOperator{\pdet}{pdet}
\DeclareMathOperator{\rk}{rk}
\newtheorem{theorem}{Theorem}
\numberwithin{theorem}{section}
\newtheorem{proposition}[theorem]{Proposition}
\newtheorem{corollary}[theorem]{Corollary}
\newtheorem{lemma}[theorem]{Lemma}
\theoremstyle{definition}
\newtheorem{definition}[theorem]{Definition}
\newtheorem{remark}[theorem]{Remark}
\newtheorem{example}[theorem]{Example}
\newcommand{\PHX}{{\phantom{X}}} 
\newcommand{\multiset}[1]{\{\!\{#1\}\!\}}
\newcommand{\bigmultiset}[1]{\left\{\!\left\{#1\right\}\!\right\}}
\newcommand{\abs}[1]{\lvert#1\rvert}
\newcommand{\HH}{\tilde{H}}
\newcommand{\Qq}{\mathbb{Q}}
\newcommand{\Rr}{\mathbb{R}}
\newcommand{\Zz}{\mathbb{Z}}
\newcommand{\xx}{\mathbf{x}}
\newcommand{\betti}{\tilde\beta}
\newcommand{\bd}{\partial}
\newcommand{\cbd}{\partial^*}
\newcommand{\wbd}{\hat\partial}
\newcommand{\wcbd}{\hat\partial^*}
\newcommand{\x}{\times}
\newcommand{\st}{\colon}
\newcommand{\sm}{\setminus}
\newcommand{\0}{\emptyset}
\newcommand{\disun}{\mathbin{\dot{\cup}}}
\newcommand{\SST}{{\mathcal T}}
\newcommand{\sud}{{\mathbf s}^{\rm ud}} 
\newcommand{\sdu}{{\mathbf s}^{\rm ud}} 
\newcommand{\stot}{{\mathbf s}^{\rm tot}} 
\newcommand{\Lud}{L^{\rm ud}} 
\newcommand{\Ldu}{L^{\rm du}} 
\newcommand{\Swtot}{\hat{\mathbf s}^{\rm tot}} 
\newcommand{\Swud}{\hat{\mathbf s}^{\rm ud}} 
\newcommand{\Swdu}{\hat{\mathbf s}^{\rm du}} 
\newcommand{\Lwtot}{\hat{L}^{\rm tot}} 
\newcommand{\Lwud}{\hat{L}^{\rm ud}} 
\newcommand{\Lwdu}{\hat{L}^{\rm du}} 
\begin{document}
\title[A weighted cellular matrix-tree theorem]{A weighted cellular matrix-tree theorem, with applications to complete colorful and cubical complexes}

\author[G. Aalipour]{Ghodratollah Aalipour}
\address{School of Mathematical Sciences, Rochester Institute of Technology}
\email{ga5481@rit.edu}
\author[A. M. Duval]{Art M.\ Duval}
\address{Department of Mathematical Sciences, University of Texas at El Paso}
\email{aduval@utep.edu}
\author[W. Kook]{Woong Kook}
\address{Department of Mathematical Sciences, Seoul National University, Seoul, Republic of Korea}
\email{woongkook@snu.ac.kr}
\author[K.-J. Lee]{Kang-Ju Lee}
\address{Department of Mathematical Sciences, Seoul National University, Seoul, Republic of Korea}
\email{leekj0706@snu.ac.kr}
\author[J. L. Martin]{Jeremy L.\ Martin}
\address{Department of Mathematics, University of Kansas}
\email{jlmartin@ku.edu}
\date{\today}
\thanks{GA thanks the Department of Mathematical Sciences at the University of Texas at El~Paso for support as a Visiting Research Scholar in 2012--2013, when this work started.  WK and KL are partially supported by Basic Science Research Program through the National Research Foundation of Korea (NRF) funded by the Ministry of Education (0450-20160023), by Samsung Science and Technology Foundation under Project Number SSTF-BA1402-08.  JLM acknowledges support from a Simons Foundation Collaboration Grant.
}

\subjclass[2010]{%
05C05, 
05C50, 
05E45} 
\begin{abstract}
We present a version of the weighted cellular matrix-tree theorem that is suitable for calculating explicit generating functions for spanning trees of highly structured families of simplicial and cell complexes.  We apply the result to give weighted generalizations of the tree enumeration formulas of Adin for complete colorful complexes, and of Duval, Klivans and Martin for skeleta of hypercubes. We investigate the latter further via a logarithmic generating function for weighted tree enumeration, and derive another tree-counting formula using the unsigned Euler characteristics of skeleta of a hypercube.
\end{abstract}

\keywords{matrix-tree theorem, Laplacian, complete colorful complex, hypercube, Euler characteristic
} 
\maketitle

\section{Introduction}

The matrix-tree theorem, first discovered by Kirchhoff in 1845, expresses the number of spanning trees of a (finite, undirected) graph in terms of the spectrum of its Laplacian matrix.  It can be used to derive closed formulas for the spanning tree counts of numerous families of graphs such as complete, complete bipartite, complete multipartite, hypercube and threshold graphs; see, e.g., \cite[\S5.6]{EC2} and \cite[Chapter 5]{Moon}.  The matrix-tree theorem has a natural weighted analogue that expresses the generating function for spanning trees in terms of the spectrum of a weighted Laplacian matrix.  For certain graphs with tight internal structure, the associated tree generating functions for statistics such as degree sequence have explicit factorizations which can be found by examination of the weighted Laplacian spectra.

Central to the matrix-tree theorem is the characterization of a spanning tree of a graph as a set of edges corresponding to a column basis of its incidence matrix.  This observation holds true in the more general context of finite simplicial and CW complexes, an idea introduced by Bolker \cite{Bolker} and Kalai \cite{Kalai} and recently studied by many authors; see~\cite{Chapter} for a survey.  The matrix-tree theorem and its weighted versions extend to this broader context, raising the question of finding explicit formulas for generating functions for spanning trees in highly structured CW complexes.  Specifically, for a CW complex $\Delta$ of dimension $\geq k$, let $\{X_\sigma\}$ be a set of commuting indeterminates corresponding to the cells $\sigma\in\Delta$, let $\SST_k(\Delta)$ denote its set of $k$-dimensional spanning trees, and let $\tilde H$ denote reduced cellular homology.  The higher-dimensional analogue of the (unweighted) tree count is
\[\tau_k(\Delta)=\sum_{\Upsilon\in\SST_k(\Delta)}\abs{\HH_{k-1}(\Upsilon;\Zz)}^2\]
and the corresponding generating function (the \emph{weighted tree count}) is
\[\hat{\tau}_k(\Delta) = \sum_{\Upsilon\in\SST_k(\Delta)} \abs{\HH_{k-1}(\Upsilon;\Zz)}^2 \prod_{\sigma\in\Upsilon_k} X_\sigma;\]
the homology-squared summands in each case arise from the proof of the matrix-tree theorem \cite{Kalai,DKM1}, and each summand simply equals 1 when $k=1$.

The indeterminates $X_\sigma$ may be further specialized.  Kalai \cite{Kalai} calculated $\tau_k$ and $\hat{\tau}_k$ for \emph{skeleta of simplices} (see~\eqref{kalai-unweighted} and~\eqref{Kalai-weight} below), respectively generalizing Cayley's formula $n^{n-2}$ for the graph case ($k=1$) and the degree-sequence generating function that can be obtained from the well-known Pr\"ufer code.  Subsequently, Duval, Klivans and Martin \cite{DKM1} gave weighted tree counts for \emph{shifted simplicial complexes} (which generalize threshold graphs), by refining Duval and Reiner's formula for their Laplacian eigenvalues \cite{Duval-Reiner}.  Unweighted tree counts for the families of \emph{complete colorful complexes} (which generalize complete bipartite and multipartite graphs) were found by Adin~\cite{Adin} (see equation~\eqref{adin-unweighted} below) and and for \emph{hypercubes} by Duval, Klivans and Martin~\cite{DKM2}.  One goal of this article is to calculate weighted tree counts for these complexes.

Our main technical tool is the following weighted version of the matrix-tree theorem.  Let $\Delta$ be a $d$-dimensional cell complex such that $\HH_i(\Delta;\Qq)=0$ for all $i<d$.  Let $\bd_k$ be the cellular boundary map from $k$-chains to $(k-1)$-chains (or the matrix representing it)  and let $\cbd_k$ be the corresponding coboundary map.  Also, let $D_i$ be the diagonal matrix with entries $X_\sigma^{1/2}$, as $\sigma$ ranges over the $i$-dimensional cells of $\Delta$.  The \emph{weighted boundary} and \emph{weighted coboundary} matrices are
\begin{align*}
\wbd_k&=D_{k-1}^{-1}\bd_kD_k, & \wcbd_k&=D_k\cbd_kD_{k-1}^{-1},
\end{align*}
and the (up-down) \emph{weighted Laplacian} is $\Lwud_{k-1}=\wbd_k\wcbd_k$ (so indexed because it acts on the space of $(k-1)$-chains).
Finally, define $\hat{\pi}_k(\Delta)$ to be the product of nonzero eigenvalues of~$\Lwud_{k-1}$.

\begin{theorem} \label{thm:CMTT}
With the foregoing assumptions, for all $0 \leq k \leq d$ we have
\[\hat{\pi}_k(\Delta)=\frac{\hat{\tau}_k(\Delta)\; \hat{\tau}_{k-1}(\Delta)}{\abs{\HH_{k-2}(\Delta)}^2\; X_{(k-1)}}\]
for every $k$, where
\[X_{(k-1)}=\prod_{\sigma\in\Delta_{k-1}} X_\sigma.\]
\end{theorem}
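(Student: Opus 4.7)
The plan is to combine three standard ingredients --- a principal-minors identity for pseudodeterminants, weighted Cauchy-Binet, and the homological interpretation of boundary-matrix minors --- while carefully tracking the diagonal weight matrices $D_i$.

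I would first reduce $\wpi_k(\Delta)$ to a sum of top-rank principal minors. Since $\Lwud_{k-1}$ is symmetric and positive semidefinite of rank $r := \mathrm{rank}\,\bd_k$, its pseudodeterminant equals the $r$th elementary symmetric function of its eigenvalues, which equals the sum over all $r$-subsets $R \subseteq \Delta_{k-1}$ of $\det(\Lwud_{k-1})_{R,R}$. Invertible diagonal scaling preserves row independence, so such a minor is nonzero iff $R = \bar\Upsilon_{k-1} := \Delta_{k-1}\sm\Upsilon_{k-1}$ for some $\Upsilon_{k-1}\in\SST_{k-1}(\Delta)$. This yields
\[\wpi_k(\Delta) = \sum_{\Upsilon_{k-1}\in\SST_{k-1}(\Delta)} \det \Lwud_{k-1}(\Upsilon_{k-1}),\]
where $\Lwud_{k-1}(\Upsilon_{k-1})$ denotes the $\bar\Upsilon_{k-1}\times\bar\Upsilon_{k-1}$ principal submatrix.

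Second, for each fixed $\Upsilon_{k-1}$, Cauchy-Binet applied to $\wbd_k|_{\bar\Upsilon_{k-1},\cdot}$, together with the factorization $\wbd_k = D_{k-1}^{-1}\bd_k D_k$, yields
\[\det \Lwud_{k-1}(\Upsilon_{k-1}) = \sum_{\Upsilon_k}\frac{\prod_{\sigma\in\Upsilon_k} X_\sigma}{\prod_{\sigma\in\bar\Upsilon_{k-1}} X_\sigma}\bigl(\det \bd_k|_{\bar\Upsilon_{k-1}, \Upsilon_k}\bigr)^2.\]
The unweighted minor vanishes unless $\Upsilon_k\in\SST_k(\Delta)$, in which case the standard integral-homology identity underlying the cellular matrix-tree theorem gives
\[\bigl(\det \bd_k|_{\bar\Upsilon_{k-1}, \Upsilon_k}\bigr)^2 = \frac{|\HH_{k-2}(\Upsilon_{k-1})|^2\, |\HH_{k-1}(\Upsilon_k)|^2}{|\HH_{k-2}(\Delta)|^2}.\]
Summing over $\Upsilon_k$ collapses to $\wtau_k(\Delta)$:
\[\det \Lwud_{k-1}(\Upsilon_{k-1}) = \frac{|\HH_{k-2}(\Upsilon_{k-1})|^2 \, \wtau_k(\Delta)}{|\HH_{k-2}(\Delta)|^2 \prod_{\sigma\in\bar\Upsilon_{k-1}} X_\sigma}.\]
Substituting into the first display and using $X_{(k-1)} = \prod_{\sigma\in\Upsilon_{k-1}} X_\sigma \cdot \prod_{\sigma\in\bar\Upsilon_{k-1}} X_\sigma$ then gives
\[\wpi_k(\Delta) = \frac{\wtau_k(\Delta)}{|\HH_{k-2}(\Delta)|^2\, X_{(k-1)}} \sum_{\Upsilon_{k-1}} |\HH_{k-2}(\Upsilon_{k-1})|^2 \prod_{\sigma\in\Upsilon_{k-1}} X_\sigma = \frac{\wtau_k(\Delta)\, \wtau_{k-1}(\Delta)}{|\HH_{k-2}(\Delta)|^2\, X_{(k-1)}}.\]

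The main obstacle is the homological determinantal identity $\bigl(\det \bd_k|_{\bar\Upsilon_{k-1}, \Upsilon_k}\bigr)^2 = |\HH_{k-2}(\Upsilon_{k-1})|^2 |\HH_{k-1}(\Upsilon_k)|^2 / |\HH_{k-2}(\Delta)|^2$, whose proof requires a careful chase through the long exact sequences of the pairs $(\Delta,\Upsilon_k)$ and $(\Upsilon_{k-1},\emptyset)$ to track integral torsion contributions. Once that identity --- which is the essence of the unweighted cellular matrix-tree theorems of Kalai and Duval--Klivans--Martin --- is in hand, the weighted extension is formal, since diagonal rescaling merely redistributes the $X_\sigma$ cleanly between the row and column index sets without interacting with the torsion factors.
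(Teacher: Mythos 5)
Your proposal is correct and follows essentially the same route as the paper: expanding the pseudodeterminant of $\Lwud_{k-1}$ into squared maximal minors of $\wbd_k$ via Binet--Cauchy and then invoking the homological minor formula, which is precisely Propositions~\ref{nonsingular-criterion}, \ref{detD-formula} and~\ref{weighted-tools} (quoted from \cite{DKM2}) that the paper also uses without reproving. Your only departures are cosmetic: you keep $k$ general instead of reducing to the top-dimensional skeleton, and you organize the expansion through principal minors of the Laplacian before applying Cauchy--Binet, whereas the paper sums over row and column sets at once.
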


This formula is a slight generalization of~\cite[Theorem~5.3]{MMRW} (in which it was assumed that $\HH_i(\Delta;\Zz)=0$ for all $i<d$)
and was obtained independently by the first two authors.  An analogous weighted formula, in which the weighted boundary maps were defined as $\bd_k D_k$, was stated in \cite[Theorem~2.12]{DKM2}, but the present version has two key advantages.  First, it is easy to check that $\wbd_k \wbd_{k+1}=0$ (i.e., the maps $\wbd_k$ form a chain complex), which leads directly to a useful identity on spectra of weighted Laplacians (see Section~\ref{section:ccc} below).  Second, Theorem~\ref{thm:CMTT} lends itself well to inductive calculations of weighted tree counts for an entire family of complexes.  (The formula of \cite{DKM2} expresses $\hat{\pi}_k$ in terms of a mix of weighted and unweighted tree counts, which is less convenient to work with.)  Comparable weighted cellular matrix-tree theorems (in which the weights carry physical interpretations) appear in \cite[Theorem~C]{CCK1}, \cite[Theorem~C]{CCK2}.  We prove Theorem~\ref{thm:CMTT} in Section~\ref{section:proof}.

Section~\ref{section:ccc} is concerned with \emph{complete colorful complexes}, which we describe briefly.  For positive integers $n_1,\dots,n_r$ and a family of pairwise disjoint vertex sets (``color classes'') $V_1,\dots,V_r$ with $\abs{V_i}=n_i$, the complete colorful complex $\Delta=\Delta_{n_1,\dots,n_r}$ is the simplicial complex on $V=V_1\cup\cdots\cup V_r$ consisting of all faces with no more than one vertex of each color.  For example, $\Delta_{n_1,n_2}$ is just a complete bipartite graph.  Complete colorful complexes were studied by Adin~\cite{Adin}, who calculated the numbers $\tau_k(\Delta)$ for all $k\leq r-1$.  Here we apply Theorem~\ref{thm:CMTT} to prove a weighted version of Adin's formula.  Set $X_F=\prod_{v\in F} X_v$ for every face $F$, so that for a pure subcomplex $\Upsilon\subseteq\Delta$ of dimension~$k$, we have $\prod_{F\in\Upsilon_k} X_F=\prod_v X_v^{\deg_\Upsilon(v)}$, where $\deg_\Upsilon(v)$ is the number of $k$-faces of~$\Upsilon$ that contain $v$.  Then $\hat{\tau}_k(\Delta)$ becomes the \emph{degree-weighted tree count}, that is, the generating function for spanning trees of  $\Delta$ by their vertex-facet degree sequence.

\begin{theorem} \label{complete-colorful-theorem}
The degree-weighted tree count for $\Delta=\Delta_{n_1,\dots,n_r}$ is
\begin{multline*}
\hat{\tau}_k(\Delta) := \sum_{\Upsilon\in\SST_k(\Delta)} \abs{\HH_{k-1}(\Upsilon;\Zz)}^2 \prod_{v\in V} X_v^{\deg_\Upsilon(v)} \\
 = \left(\prod_{q=1}^r P_q^{E_{k,q}}\right)\prod_{\substack{J\subseteq [r]\\ \abs{J}\leq k}}
  \left(\sum_{q\in[r]\sm J} S_q \right)^{\binom{r-2-\abs{J}}{k-\abs{J}} \prod_{t \in J}(n_t - 1)}
\end{multline*}
where
\begin{align*}
S_q&=\sum_{v\in V_q}X_v, &
P_q&=\prod_{v\in V_q}X_v, &
E_{k,q}&=\sum_{\substack{J\subseteq [r]\sm\{q\}:\\ \abs{J} \leq k-1}}(-1)^{k-1-\abs{J}}\prod_{t\in J}n_t.
\end{align*}
\end{theorem}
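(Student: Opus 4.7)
The plan is to apply Theorem~\ref{thm:CMTT} inductively on $k$, taking as input the spectrum of the weighted up-down Laplacian $\Lwud_{k-1}(\Delta)$. For the base case $k=0$, every vertex is a $0$-dimensional spanning tree with $|\HH_{-1}|=1$, giving $\wtau_0(\Delta)=\sum_{v\in V}X_v=\sum_q S_q$, which agrees with the claimed formula (only the $J=\emptyset$ term survives, with exponent $\binom{r-2}{0}=1$, and each $E_{0,q}$ is an empty sum). For the inductive step, the complete colorful complex is shellable, hence Cohen--Macaulay over $\Zz$, so $|\HH_{k-2}(\Delta;\Zz)|=1$ and Theorem~\ref{thm:CMTT} reduces to
\[
\wtau_k(\Delta)=\frac{\wpi_k(\Delta)\,X_{(k-1)}}{\wtau_{k-1}(\Delta)}.
\]

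The main technical step is to show that the nonzero eigenvalues of $\Lwud_{k-1}(\Delta)$ are $\lambda_J:=\sum_{q\in[r]\sm J}S_q$, indexed by subsets $J\subseteq[r]$ with $|J|\leq k$, each with multiplicity $m_{k,J}:=\binom{r-1-|J|}{k-|J|}\prod_{t\in J}(n_t-1)$. I would establish this by adapting Adin's unweighted eigenvector construction from \cite{Adin} to the weighted setting: for each pair consisting of a subset $J$ and a system of ``weighted mean-zero'' functions $f_t$ on the color classes $V_t$ ($t\in J$), one builds eigenvectors of $\wbd_k\wcbd_k$ with eigenvalue $\lambda_J$ by forming alternating sums over $(k-1)$-faces whose color set is a $k$-subset containing $J$. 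The conjugations by $D_{k-1}$ and $D_k$ in $\wbd_k=D_{k-1}^{-1}\bd_kD_k$ are exactly what is needed for the weights $S_q$ to replace Adin's unweighted $n_q$'s as eigenvalues. A rank computation, using the known homology of the complete colorful complex to control $\dim\ker\wbd_k$, confirms that every nonzero eigenvalue is accounted for. This spectrum calculation is the principal obstacle, demanding careful interaction between Adin's eigenvector machinery and the weight conjugation.

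With the spectrum in hand, the inductive simplification is combinatorial bookkeeping. On the eigenvalue side, Pascal's rule $\binom{r-2-|J|}{k-|J|}+\binom{r-2-|J|}{k-1-|J|}=\binom{r-1-|J|}{k-|J|}$ combines the multiplicities appearing in $\wtau_k$ and $\wtau_{k-1}$ into $m_{k,J}$ for $|J|\leq k-1$, with the $|J|=k$ contribution handled by $\binom{r-2-k}{0}=\binom{r-1-k}{0}=1$. On the monomial side, the alternating sum $E_{k,q}+E_{k-1,q}$ telescopes to $\sum_{J\subseteq[r]\sm\{q\},\,|J|=k-1}\prod_{t\in J}n_t$, which is the common degree of every $v\in V_q$ in the $(k-1)$-skeleton of $\Delta$, so that $\prod_q P_q^{E_{k,q}+E_{k-1,q}}=X_{(k-1)}$ exactly cancels the factor of $X_{(k-1)}$ appearing in the recursion.
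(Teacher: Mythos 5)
Your inductive frame (base case $k=0$, then $\wtau_k=\wpi_k X_{(k-1)}/\wtau_{k-1}$ from Theorem~\ref{thm:CMTT}, torsion trivial by shellability) is sound, and your bookkeeping is correct: $E_{k,q}+E_{k-1,q}$ does telescope to the number of $(k-1)$-faces through a vertex of color $q$, so $\prod_q P_q^{E_{k,q}+E_{k-1,q}}=X_{(k-1)}$, and Pascal's rule does merge the exponents of consecutive $B$'s. But the heart of your argument --- that the nonzero eigenvalues of $\Lwud_{k-1}(\Delta)=\wbd_k\wcbd_k$ are $\lambda_J=\sum_{q\notin J}S_q$ for $|J|\leq k$, with multiplicity $\binom{r-1-|J|}{k-|J|}\prod_{t\in J}(n_t-1)$ --- is asserted rather than proved. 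You propose to ``adapt Adin's eigenvector construction'' to the weighted setting and to ``confirm by a rank computation,'' but that adaptation is exactly where all the work lies: nothing in your write-up verifies that the weighted mean-zero vectors survive the conjugation by $D_{k-1}$ and $D_k$, nor that the rank/kernel count closes up. As written this is a genuine gap, even though the asserted spectrum is in fact correct (it can be recovered from Lemma~\ref{colorful-eigenvalues} together with Proposition~\ref{spectrum-identities}, peeling $\Swud_{k}$ off $\Swtot_{k}$ by induction on $k$).

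For comparison, the paper avoids the up-down spectrum altogether. Since $\Delta$ is a join of $0$-dimensional complexes, the tensor-product identification of weighted chain complexes gives the \emph{total} weighted spectrum $\Swtot_k(\Delta)$ essentially for free (formula~\eqref{weighted-eigenvalues-of-join} and Lemma~\ref{colorful-eigenvalues}: the factors have spectra $\multiset{S_q}$ and $\multiset{S_q;\ 0\colon n_q-1}$, and one just adds). Then, instead of your one-step recursion, the paper multiplies two consecutive instances of Theorem~\ref{thm:CMTT} to get $\wtau_{k+1}\wtau_k^2\wtau_{k-1}=X_{(k)}X_{(k-1)}\wpi_{k+1}\wpi_k$, and by Proposition~\ref{spectrum-identities} the right-hand side is $X_{(k)}X_{(k-1)}\pdet\Lwtot_k$, so only the total spectrum is ever needed. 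Your route would also work, and it yields the finer information of the up-down spectrum itself, but to complete it you must either carry out the weighted Adin eigenvector argument in detail or replace it with the join-formula derivation; with the paper's two-level recurrence you could skip that refinement entirely.
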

This formula generalizes Adin's unweighted count (which can be obtained by setting $X_v=1$ for all $v\in V$), as well as known  formulas for weighted and unweighted spanning tree counts for complete multipartite graphs (the case $k=1$).

Sections~\ref{section:hypercubes} and~\ref{sec-twoinvariants} are concerned with the \emph{hypercube} $Q_n=[0,1]^n\subset\Rr^n$, which has a natural CW structure with $3^n$ cells of the form $\sigma=(\sigma_1,\dots,\sigma_n)$ with $\sigma_i\in\{[0,1],0,1\}$. Let $q_1,\dots,q_n,y_1,\dots,y_n,z_1,\dots,z_n$ be commuting indeterminates, and assign to each face
$\sigma\in Q_n$ the monomial weight
\[X_\sigma=
\left(\prod_{i:\ \sigma_i=[0,1]} q_i\right)
\left(\prod_{i:\ \sigma_i=0} y_i\right)
\left(\prod_{i:\ \sigma_i=1} z_i\right).\]
In Section~\ref{section:hypercubes}, we apply Theorem~\ref{thm:CMTT} to
prove the following weighted enumeration formula for spanning trees of hypercubes, which appeared as~\cite[Conjecture~4.3]{DKM2}.

\begin{theorem} \label{thm:cube-tree}
Let $n\geq k\geq 0$ be integers.  Then the $k$-th weighted tree count of $Q_n$ is
\begin{multline*}
\hat{\tau}_k(Q_n) := \sum_{\Upsilon\in\SST_k(Q_n)} \abs{\HH_{k-1}(\Upsilon;\Zz)}^2 \prod_{\sigma\in\Upsilon_k} X_\sigma \\
= (q_1\cdots q_n)^{\sum_{i=k-1}^{n-1}\binom{n-1}{i}\binom{i-1}{k-2}}
\prod_{\substack{S\subseteq[n]\\ \abs{S}>k}}
\left(\sum_{i\in S} q_i(y_i+z_i)\prod_{j\in S\sm i}y_jz_j\right)^{\binom{\abs{S}-2}{k-1}}.
\end{multline*}
\end{theorem}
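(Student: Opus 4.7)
The plan is to apply Theorem~\ref{thm:CMTT} inductively on $k$, exploiting the CW product structure $Q_n = Q_1^{\times n}$. Since $Q_n$ is contractible, $\HH_i(Q_n;\Zz)=0$ for all $i\geq 0$, and Theorem~\ref{thm:CMTT} reduces to the recursion
\[
\wtau_k(Q_n)=\frac{\wpi_k(Q_n)\,X_{(k-1)}}{\wtau_{k-1}(Q_n)},
\]
which I would iterate from the base case $\wtau_0(Q_n)=\prod_{i=1}^n(y_i+z_i)$ (a $0$-dimensional spanning tree is a weighted vertex). The task at each step is to compute $\wpi_k(Q_n)$, the product of nonzero eigenvalues of $\Lwud_{k-1}(Q_n)$.

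The structural input is that the weighted cellular chain complex of $Q_n$ factors as $C_*(Q_1)^{\otimes n}$, with weights distributing multiplicatively across factors. For a single interval $Q_1$ with weights $y_i,z_i,q_i$, a direct $2\times 1$ calculation shows that the weighted boundary $\wbd_1$ has entries $-y_i^{-1/2}q_i^{1/2}$ and $z_i^{-1/2}q_i^{1/2}$, giving a unique nonzero eigenvalue $\lambda_i=q_i(y_i+z_i)/(y_iz_i)$ of $\Lwud_0(Q_1)$. Combining Hodge decompositions across the $n$ factors in a Künneth-type argument (parallel to the unweighted treatment in \cite{DKM2}), I would show that the nonzero eigenvalues of $\Lwud_{k-1}(Q_n)$ are exactly the sums $\sum_{i\in S}\lambda_i$ indexed by subsets $S\subseteq[n]$ with $|S|\geq k$, each occurring with multiplicity $\binom{|S|-1}{k-1}$.

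Writing $\sum_{i\in S}\lambda_i=\mu_S\big/\prod_{j\in S}y_jz_j$, where $\mu_S=\sum_{i\in S}q_i(y_i+z_i)\prod_{j\in S\sm i}y_jz_j$ is the expression in the theorem statement, and substituting into the recursion, the induction hypothesis for $\wtau_{k-1}(Q_n)$ contributes the matching factor $\prod_S\mu_S^{\binom{|S|-2}{k-2}}$ in the denominator. Pascal's rule $\binom{|S|-1}{k-1}-\binom{|S|-2}{k-2}=\binom{|S|-2}{k-1}$ then collapses the exponent of $\mu_S$ to its stated value and causes the $|S|=k$ terms to cancel entirely. The leftover $y_jz_j$ denominators combine with the explicit value $X_{(k-1)}=\prod_i q_i^{\binom{n-1}{k-2}\,2^{n-k+1}}(y_iz_i)^{\binom{n-1}{k-1}\,2^{n-k}}$ and with the inductive $q$-prefactor; a Vandermonde-type binomial identity then rearranges the net $q$-exponent into the form $\sum_{i=k-1}^{n-1}\binom{n-1}{i}\binom{i-1}{k-2}$, while all powers of $y_i$ and $z_i$ outside $\mu_S$ cancel identically.

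The main obstacle is the second step: the Laplacian of a tensor product of chain complexes is $\Delta_A\otimes 1+1\otimes\Delta_B$, not a tensor product of Laplacians, so separating its up-down and down-up parts across the tensor factors at each degree requires an explicit Hodge decomposition of each $C_*(Q_1)$ into harmonic, boundary, and coboundary summands, together with careful tracking of which tensor-basis elements of $C_{k-1}(Q_n)$ lie in $\operatorname{im}(\wbd_k)$ versus $\ker(\wcbd_k)$. Verifying that the resulting multiplicity $\binom{|S|-1}{k-1}$ arises correctly (essentially: choosing the positions of the $k$ ``top'' factors inside a block $S$ of ``excited'' factors) is the heart of the calculation. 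Once the spectrum and multiplicities of $\Lwud_{k-1}(Q_n)$ are rigorously established, the remaining algebra is intricate but entirely routine.
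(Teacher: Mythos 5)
Your overall route is the same as the paper's: induct on $k$ via Theorem~\ref{thm:CMTT} (the torsion factor being trivial since $Q_n$ is contractible), start from $\wtau_0(Q_n)=\prod_i(y_i+z_i)$, feed in the spectrum of $\Lwud_{k-1}(Q_n)$, and finish with Pascal/Vandermonde bookkeeping; your $\sum_{i\in S}\lambda_i$ is exactly the paper's $u_S$, and your multiplicities $\binom{|S|-1}{k-1}$ for $|S|\geq k$ are exactly the ones the paper uses. The one substantive difference is how that spectral input is obtained: the paper simply cites it as \cite[Theorem~4.2]{DKM2}, whereas you propose to rederive it from a weighted K\"unneth/Hodge decomposition of $C_\bullet(Q_1)^{\otimes n}$. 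That derivation is plausible (it parallels the join computation of Section~\ref{section:ccc} and formula~\eqref{weighted-eigenvalues-of-join}), but as you yourself note it is the heart of the matter, and you have only sketched it: splitting the up-down part of $\Delta_A\otimes 1+1\otimes\Delta_B$ degree by degree and extracting the multiplicity $\binom{|S|-1}{k-1}$ is real work. Either carry that out in full, or (more economically, and as the paper does) quote \cite[Theorem~4.2]{DKM2}; with that in hand the rest of your argument coincides with the paper's proof.

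Two smaller points. First, the ``routine algebra'' at the end needs the identity $\sum_{i\geq K}\binom{i}{K}\binom{N}{i}=\binom{N}{K}2^{N-K}$ (Lemma~\ref{standardized-identity}) not only for the $q$-exponent but also to match the $y,z$-exponents of $X_{(k-1)}$ against the $y_Sz_S$ denominators hidden in the $u_S$; the paper does this in~\eqref{C-result}. Second, under the paper's binomial conventions~\eqref{binomial-coeffs}, Pascal's rule fails precisely at $n=k=0$, and your blanket identity $\binom{|S|-1}{k-1}-\binom{|S|-2}{k-2}=\binom{|S|-2}{k-1}$ hits that exception when $|S|=k=1$ (the left side is $0$ while $\binom{-1}{0}=1$). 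The conclusion you want there --- that the $|S|=k$ terms cancel --- is still true, but that case must be handled separately, as the paper is careful to do in its computation of $B_kB_{k-1}$.
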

Setting $k=1$ recovers the weighted tree count for hypercube graphs found by Martin and Reiner~\cite[Theorem~3]{JLM-VR-Facto}; see also Remmel and Williamson~\cite{RW}.

In general, any family of cell complexes with a strong recursive structure ought to be a good candidate for application of Theorem~\ref{thm:CMTT}. Examples include  the shifted cubical complexes described in \cite[\S5]{DKM2} and joins of $k$-skeleta of simplices (which generalize complete colorful complexes, the case $k=0$).

Section~\ref{sec-twoinvariants} is concerned with a \emph{logarithmic} approach to weighted enumeration of trees in a hypercube $Q_n$, focusing on the multiplicities of the weights. Logarithmic generating functions for unweighted tree counts of an acyclic cell complex were given in~\cite[Theorem~8]{KK}, and for weighted tree counts of a $\Zz$-APC simplicial complex (defined in Section~\ref{sec-prelim}) in~\cite[Theorem~5]{KL15}.  This gives a different formula for $\hat{\tau}_k(Q_n)$, in terms of the reduced Euler characteristics of skeleta of $Q_{n-1}$. This formula can be reduced to Theorem~\ref{thm:cube-tree} together with several identities. Let $\abs{\tilde{\chi}(K)}$ denote the unsigned reduced Euler characteristic for a cell complex $K$. Let $q_{[n]}=q_1\cdots q_n$, and define $y_{[n]}$ and $z_{[n]}$ similarly.

\begin{theorem}\label{inter-hyper-theorem} For $k \in [n]$, the $k$-th weighted tree count of $Q_n$ is
\begin{equation*}\label{eq:cube-euler}
\hat{\tau}_k(Q_n) = \big(q_{[n]}\big)^{\abs{\tilde{\chi}({Q_{n-1}^{(k-2)}})}}\big(y_{[n]}z_{[n]}\big)^{\abs{\tilde{\chi}({Q_{n-1}^{(k-1)}})}} \prod_{\substack{S\subseteq[n]\\ \abs{S}>k}}{u_S}^{{\binom{\abs{S}-2}{k-1}}}
\end{equation*}
where \(u_S=\sum_{i \in S} q_i\left(y_i^{-1}+z_i^{-1}\right).\)
Moreover, $\hat{\tau}_0(Q_n)=\prod_{i \in [n]}{(y_i+z_i)}$.
\end{theorem}

The authors thank Vic Reiner for helpful conversations, and an anonymous referee for a careful reading of the manuscript.

\section{Preliminaries}\label{sec-prelim}

For an integer $n$, the symbol $[n]$ denotes the set $\{1,\dots,n\}$.  The notation $\multiset{x_1\colon m_1;\ \dots;\ x_n\colon m_n}$ denotes the multiset in which each element~$x_i$ appears with multiplicity~$m_i$.  If a multiplicity is omitted, it is assumed to be~1.
The union of two multisets is defined by adding multiplicities, element by element.

We adopt the following convention for binomial coefficients with negative arguments:
\begin{equation} \label{binomial-coeffs}
\binom{n}{k}=\begin{cases}
\frac{n(n-1)\cdots(n-k+1)}{k!} & \text{ if } k>0,\\
1 & \text{ if } n=k \text{ or } k=0,\\
\binom{n+1}{k+1}-\binom{n}{k+1} & \text{otherwise.}
\end{cases}
\end{equation}
This is the standard definition for $k\geq 0$, and it satisfies the Pascal recurrence $\binom{n}{k}=\binom{n-1}{k}+\binom{n-1}{k-1}$ for all $n,k\in\Zz$, with the single exception $n=k=0$.  This convention will be useful in the proofs of  
Theorem \ref{complete-colorful-theorem}, Theorem \ref{thm:cube-tree} and Lemma \ref{standardized-identity}. 
In particular, $\binom{n}{k}=0$ whenever $k<0\leq n$,  $0\leq n<k$, or $n<k<0$. 
Moreover, note that $\binom{n}{0}=1$ and  $\binom{n}{1}=n$ for all $n\in\Zz$.

We will state our results in the general setting of cell (CW) complexes as far as possible.  A reader more comfortable with simplicial complexes may safely replace ``cell complex'' by ``simplicial complex'' throughout, as most of the basic topological facts about simplicial complexes~\cite[pp.~19--24]{Stanley-CCA} have natural analogues in the cellular setting; see, e.g., \cite[\S2.2]{Hatcher}.

Throughout the paper, $\Delta$ will be a cell complex of dimension~$d$ with finitely many cells.  We use the standard symbols $C_k(\Delta;R)$, $\bd_k(\Delta)$, $f_k(\Delta)$, $\HH_k(\Delta;R)$ and $\betti_k(\Delta)$ for the chain groups, cellular boundary maps, face numbers, reduced homology and reduced Betti numbers of a cell complex $\Delta$, dropping $\Delta$ from the notation when convenient.  If the coefficient ring $R$ is omitted, it is assumed to be $\Zz$.  The set of $k$-dimensional cells is denoted by $\Delta_k$, and the $k$-skeleton by $\Delta^{(k)}$.  We say that $\Delta$ is \emph{acyclic in positive codimension} (APC) if $\HH_k(\Delta;\Qq)=0$ (equivalently, $\betti_k(\Delta)=0$ or $\abs{\HH_k(\Delta;\Zz)}<\infty$) for all $k<d$.

We review some of the theory of cellular spanning trees; for a complete treatment, see~\cite{Chapter}.

\begin{definition} \label{defn:tree}
Let $\Delta$ be a $d$-dimensional APC cell complex.  A subcomplex $\Upsilon \subseteq \Delta$ is a \emph{(cellular) spanning tree} if 
\begin{enumerate}
\item $\Upsilon^{(d-1)} = \Delta^{(d-1)}$ (``spanning'');
\item $\tilde H_{d-1}(\Upsilon;\mathbb{Z})$ is a finite group (``connected'');
\item $\tilde H_d(\Upsilon;\mathbb{Z})=0$ (``acyclic'');
\item $f_d(\Upsilon) = f_d(\Delta) - \betti_d(\Delta)$ (``count'').
\end{enumerate}
In fact, if $\Upsilon$ satisfies (a), then any two of conditions (b), (c), (d) together imply the third.  A \emph{$k$-tree} of~$\Delta$ is a cellular spanning tree of the $k$-skeleton~$\Delta^{(k)}$.  The set of $k$-trees of $\Delta$ is denoted by $\SST_k(\Delta)$.
\end{definition}
In particular, $\SST_{-1}(\Delta)=\{\0\}$, while $\SST_0(\Delta)=\Delta_0$ is the set of vertices of~$\Delta$ and $\SST_1(\Delta)$ is the set of spanning trees of the graph $\Delta^{(1)}$.

It is possible to relax the assumption that $\Delta$ is APC and give a more general definition of a cellular spanning forest (see~\cite{Chapter}), but since all the complexes we are considering here are APC, the assumption is harmless and will simplify the exposition.

For $k\leq d$, define
\[\tau_k =\tau_k(\Delta) = \sum_{\Upsilon\in\SST_k(\Delta)} \abs{\HH_{k-1}(\Upsilon;\Zz)}^2.\]
This number is the higher-dimensional analogue of the number of spanning trees of a graph.
Note that $\tau_{-1}=1$ and $\tau_0$ is the number of vertices.

We recall two classical results about tree counts in simplicial complexes.  First, let $n,d$ be integers and let $K_n^d$ denote the $d$-skeleton of a simplex with $n$-vertices.  Kalai~\cite{Kalai} proved that
\begin{equation}
\label{kalai-unweighted}
\tau_d(K_n^d)=n^{\binom{n-2}{d}},
\end{equation}
generalizing Cayley's formula $n^{n-2}$ for the number of labeled trees on~$n$ vertices.  Second, let $V_1,\dots,V_r$ be pairwise-disjoint sets of cardinalities $n_1,\ldots,n_r$, and let $\Delta_{n_1,\ldots,n_r}$ denote the \emph{complete colorful complex} on $V_1\cup\cdots\cup V_r$, whose facets are the sets meeting each $V_i$ in one point.  Adin~\cite{Adin} proved that
\begin{equation} \label{adin-unweighted}
\tau_k(\Delta_{n_1,\ldots,n_r}) =
\prod_{\substack{D \subseteq [r] \\ \abs{D}\leq k}} 
\left(\sum_{q\notin D}n_q\right)^{\binom{r-2-\abs{D}}{k-\abs{D}}\prod_{t\in D}(n_t-1)}
\end{equation}
for all $n_1,\dots,n_r$ and all $k=0,\dots,r-1$.  The case $r=1$ is trivial, and the case $r=2$ reduces to the known formula $\tau(K_{n,m})= n^{m-1}m^{n-1}$.  Furthermore, setting $r=n$, $k=d$, and $n_i=1$ for all $i$ recovers Kalai's formula~\eqref{kalai-unweighted}.

We now turn to weighted enumeration of spanning trees.  Let $\{X_\sigma\st \sigma\in\Delta\}$ be a family of commuting indeterminates corresponding to the faces of $\Delta$.  We define a polynomial analogue of $\tau_k(\Delta)$, the \emph{weighted tree count}, by
\[
\hat{\tau}_k = \hat{\tau}_k(\Delta) = \sum_{\Upsilon\in\SST_k(\Delta)} \abs{\HH_{k-1}(\Upsilon;\Zz)}^2 \prod_{\sigma\in\Upsilon_k} X_\sigma.
\]
Note that setting $X_\sigma=1$ for all $\sigma$ specializes $\hat{\tau}_k$ to $\tau_k$.  For the $d$-skeleton $K_n^d$ of an $n$-vertex simplex and with $X_\sigma = \prod_{i \in \sigma} y_i$,
Kalai~\cite{Kalai} proved that
\begin{equation}\label{Kalai-weight}
\hat{\tau}_d(K_n^d)=(X_{1}\cdots X_{n})^{\binom{n-2}{d-1}} (X_{1}+\cdots+X_{n})^{\binom{n-2}{d}}.
\end{equation}

Next we define weighted versions of the boundary operators of a cell complex.  The payoff will be a version of the cellular matrix-tree theorem that can be used recursively to calculate the weighted tree counts $\hat{\tau}_k(\Delta)$ for families such as complete colorful simplicial complexes.

Let $\xx=\{x_\sigma\st \sigma\in\Delta\}$ be a family of commuting indeterminates, and let $R=\Zz[\xx,\xx^{-1}]$ be the ring of Laurent polynomials in $\{x_\sigma\}$ over~$\Zz$.  In addition, set $X_\sigma=x_\sigma^2$.  For $k\leq d$, let $D_k$ denote the diagonal matrix with entries $(x_\sigma:\ \sigma\in\Delta_k)$.

\begin{definition} \label{weighted-boundary}
Let $\bd_k=\bd_k(\Delta)$ be the cellular boundary map $C_k(\Delta)\to C_{k-1}(\Delta)$, regarded as a matrix with rows and columns indexed by $\Delta_{k-1}$ and $\Delta_k$ respectively.  The \emph{$k^{th}$ weighted boundary map} is then the map $\wbd_k=\wbd_k(\Delta): C_k(\Delta,R)\to C_{k-1}(\Delta,R)$ given by $\wbd_k = D_{k-1}^{-1} \bd_k D_k$. 
\end{definition}

We will abbreviate $\wbd_k(\Delta)$ by $\wbd_k$ when the complex $\Delta$ is clear from context (and similarly for the other invariants to be defined shortly).  We define coboundary maps $\wcbd_k \colon C_{k-1}(\Delta,R) \to C_k(\Delta,R)$ by associating cochains with chains in the natural way, so that the matrix representing $\wcbd_k$ is just the transpose of that representing $\wbd_k$.

\begin{definition} \label{defn:weighted-L}
The $k$-th \emph{weighted up-down, down-up and total Laplacian operators} are respectively $\Lwud_k=\wbd_{k+1}\wcbd_{k+1}$, $\Lwdu_k = \wcbd_k \wbd_k$ and $\Lwtot_k = \Lwud_k + \Lwdu_k$. 
\end{definition}

Let $\sud_k$ and $\Swud_k$ denote the multisets of eigenvalues of~$\Lud_k$ and~$\Lwud_k$, respectively, and define $\sdu_k$, $\stot_k$, $\Swdu_k$, and $\Swtot_k$ similarly.  The notation $\mathbf{a} \circeq \mathbf{b}$ will indicate that two multisets $\mathbf{a}$ and $\mathbf{b}$ are equal up to the multiplicity of the element 0.  We recall the following well-known facts (e.g., \cite[Section 3]{Duval-Reiner}), and include their proofs for completeness.

\begin{proposition} \label{spectrum-identities}
For all $k\geq 0$ we have $\Swud_k \circeq \Swdu_{k+1}$ and $\Swtot_k \circeq \Swud_k \cup \Swdu_k$.  (Recall that multiset union is defined by adding multiplicities).
\end{proposition}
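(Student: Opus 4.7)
My plan is to prove the two identities separately, using standard linear algebra together with the chain complex structure of the weighted boundary maps.

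For the first identity $\Swud_k \circeq \Swdu_{k+1}$, I would invoke the elementary fact that for any matrices $A$ and $B$ for which $AB$ and $BA$ are both defined, the multisets of nonzero eigenvalues of $AB$ and $BA$ coincide (with multiplicities): the map $v \mapsto Bv$ sends a $\lambda$-eigenvector of $AB$ to a $\lambda$-eigenvector of $BA$ whenever $\lambda \neq 0$, and similarly in reverse, yielding a bijection on generalized eigenspaces for each nonzero $\lambda$. Applying this to $A = \wbd_{k+1}$ and $B = \wcbd_{k+1}$ gives precisely $\Swud_k \circeq \Swdu_{k+1}$.

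For the second identity $\Swtot_k \circeq \Swud_k \cup \Swdu_k$, the essential input is that the weighted boundaries form a chain complex: a direct computation gives
\[\wbd_k \wbd_{k+1} = D_{k-1}^{-1} \bd_k D_k D_k^{-1} \bd_{k+1} D_{k+1} = D_{k-1}^{-1} \bd_k \bd_{k+1} D_{k+1} = 0,\]
and transposing yields $\wcbd_{k+1} \wcbd_k = 0$. Multiplying these out shows that $\Lwud_k \Lwdu_k = 0 = \Lwdu_k \Lwud_k$, so the two weighted Laplacians commute. Moreover, each Laplacian has the form $M M^T$ (with $M = D_k^{-1} \bd_{k+1} D_{k+1}$ and $M = D_{k-1}^{-1} \bd_k D_k$ respectively), hence is symmetric and diagonalizable over the field of fractions of $R$.

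Commuting diagonalizable operators are simultaneously diagonalizable, so I would fix a common eigenbasis of $C_k(\Delta, R)$. For each basis vector $v$ with $\Lwud_k v = \alpha v$ and $\Lwdu_k v = \beta v$, the relation $\Lwud_k \Lwdu_k = 0$ forces $\alpha \beta = 0$, so at least one of the eigenvalues vanishes. The corresponding eigenvalue of $\Lwtot_k$ is $\alpha + \beta$, which is nonzero exactly when precisely one of $\alpha, \beta$ is nonzero; in that case it equals the unique nonzero one. Reading off multiplicities across the common eigenbasis immediately gives $\Swtot_k \circeq \Swud_k \cup \Swdu_k$.

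The main obstacle, such as it is, lies in being careful that all of this is legitimate over the Laurent polynomial ring $R$ rather than over $\Rr$; the point is that both Laplacians are conjugate to genuine symmetric matrices (after specialization, or over the algebraic closure of the fraction field), so diagonalizability and the standard symmetric-matrix facts apply. Everything else is bookkeeping with eigenvalues and multiplicities.
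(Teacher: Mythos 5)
Your proposal is correct and takes essentially the same approach as the paper: the fact that $AB$ and $BA$ have the same nonzero spectrum gives the first identity, and the chain-complex relation $\wbd_k\wbd_{k+1}=0$ (so that $\Lwud_k$ and $\Lwdu_k$ annihilate each other) drives the second. Your passage through simultaneous diagonalization, with the care about working over $R$ via specialization, is only a mild repackaging of the paper's observation that for $\lambda\neq 0$ the $\lambda$-eigenspace of $\Lwtot_k$ is the direct sum of those of $\Lwud_k$ and $\Lwdu_k$.
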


\begin{proof}
For the first identity, if $M$ is any square matrix and $v$ is a nonzero eigenvector of $M^TM$ with eigenvalue $\lambda\neq0$, then $Mv$ is a nonzero eigenvector of $MM^T$ with eigenvalue $\lambda$.  Thus the multiplicity of every nonzero eigenvalue is the same for $MM^T$ as for $M^TM$.  For the second identity, the operators
$\Lwdu_k$ and $\Lwud_k$ annihilate each other (because $\wbd_k\wbd_{k+1}=0$), so for every $\lambda\neq0$, the $\lambda$-eigenspace of $\Lwtot_k$ is the direct sum of those of $\Lwud_k$ and $\Lwdu_k$.
\end{proof}

The \emph{pseudodeterminant} $\pdet M$ of a square matrix $M$ is the product of its nonzero eigenvalues.  (Thus $\pdet M=\det M$ if $M$ is nonsingular.)  Define
\begin{equation} \label{define-pik}
\pi_k(\Delta) = \pdet \Lud_{k-1} = \pdet \Ldu_k, \qquad
\hat{\pi}_k(\Delta) = \pdet \Lwud_{k-1} = \pdet \Lwdu_k
\end{equation}
where the second and fourth equalities follow from Proposition~\ref{spectrum-identities}.  These invariants are linked to cellular tree and forest enumeration in~$\Delta$.

\section{Proof of the main formula} \label{section:proof}

In this section, we prove the weighted version of the cellular matrix-tree theorem (Theorem~\ref{thm:CMTT}) that we will use to enumerate trees in complete colorful complexes and skeleta of hypercubes.  As mentioned before, the result is a slight generalization of Theorem~5.3 of~\cite{MMRW}.

As before, let $\Delta$ be a finite cell complex of dimension~$d$.  Let $T\subseteq\Delta_d$ and $S\subseteq\Delta_{d-1}$ such that $\abs{T}=\abs{S}=f_d-\betti_d$.  Define
\[
\Delta_T = T \cup \Delta^{(d-1)},\qquad
\bar{S} = \Delta_{d-1}\setminus S,\qquad
\Delta_{\bar{S}} = \bar{S} \cup \Delta_{(d-2)},
\]
and let $\bd_{S,T}$ be the square submatrix of~$\bd_d$ with rows indexed by~$S$ and columns indexed by~$T$.

\begin{proposition}[{\cite[Proposition~2.6]{DKM2}}]  \label{nonsingular-criterion}
The matrix $\bd_{S,T}$ is nonsingular if and only if $\Delta_T\in\SST_d(\Delta)$ and 
$\Delta_{\bar{S}}\in \SST_{d-1}(\Delta)$.
\end{proposition}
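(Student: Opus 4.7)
The plan is to translate each of the two spanning-tree conditions into a rank condition on $\bd_d$ and then run a short block-matrix argument. First I would establish that $\Delta_T \in \SST_d(\Delta)$ if and only if the columns of $\bd_d$ indexed by $T$ are linearly independent, and that $\Delta_{\bar S} \in \SST_{d-1}(\Delta)$ if and only if the rows of $\bd_d$ indexed by $S$ are linearly independent. Given the APC hypothesis and $|S|=|T|=f_d-\betti_d$, the column version is essentially immediate from Definition~\ref{defn:tree}: column-$T$ independence of $\bd_d$ says $\HH_d(\Delta_T)=0$, and together with the count condition this forces $\HH_{d-1}(\Delta_T)$ to be finite.

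For the row version I would use APC to identify $\ker\bd_{d-1}$ with $\mathrm{im}\,\bd_d$ over~$\Qq$; both are the $(d-1)$-cycle space $Z_{d-1}$, whose $\Qq$-dimension equals $f_d-\betti_d=|S|$. Splitting $C_{d-1}=C_S\oplus C_{\bar S}$ according to $\Delta_{d-1}=S\cup\bar S$, the condition $\HH_{d-1}(\Delta_{\bar S};\Qq)=0$ amounts to $Z_{d-1}\cap C_{\bar S}=0$, i.e.\ the projection $\pi_S\colon Z_{d-1}\to C_S$ is injective and hence (by the dimension count) bijective. Equivalently, the composition $\pi_S\circ\bd_d\colon C_d\to C_S$, whose matrix is the $S$-row block of $\bd_d$, is surjective, which means the $S$-rows of $\bd_d$ are linearly independent.

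With both characterizations in hand, I would write $\bd_d$ in block form
\[
\bd_d = \begin{pmatrix} \bd_{S,T} & \bd_{S,\bar T}\\ \bd_{\bar S,T} & \bd_{\bar S,\bar T}\end{pmatrix}.
\]
The $(\Leftarrow)$ direction is then immediate: nonsingularity of $\bd_{S,T}$ supplies $|T|$ independent columns and $|S|$ independent rows, and these independencies pass to the ambient column and row blocks of $\bd_d$. For $(\Rightarrow)$, since $\mathrm{rank}\,\bd_d=|S|$, row-$S$ independence means every $\bar S$-row of $\bd_d$ is a linear combination of $S$-rows, so there is a matrix $M$ with $\bd_{\bar S,*}=M\,\bd_{S,*}$. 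This yields
\[
\begin{pmatrix}\bd_{S,T}\\ \bd_{\bar S,T}\end{pmatrix}=\begin{pmatrix}I\\ M\end{pmatrix}\bd_{S,T},
\]
and since the left-hand side has column rank $|T|$ by the spanning-tree condition on $\Delta_T$, while the prefactor $\begin{pmatrix}I\\ M\end{pmatrix}$ has full column rank, $\bd_{S,T}$ itself must have rank $|T|=|S|$ and so is nonsingular.

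The main obstacle I anticipate is the row characterization of $\SST_{d-1}$-membership: it is where APC does real work, and one must carefully distinguish the cycle space of the subcomplex $\Delta_{\bar S}$ (computed via the restricted $\bd_{d-1}$) from $\mathrm{im}\,\bd_d\subset C_{d-1}$. Once the identification $Z_{d-1}=\mathrm{im}\,\bd_d$ over~$\Qq$ is in place, everything else reduces to the short block-matrix computation above.
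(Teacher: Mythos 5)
The paper itself contains no proof of this proposition: it is imported verbatim from \cite[Proposition~2.6]{DKM2}, so there is no internal argument to compare you against, and I judge your proof on its own merits. It is correct, and it is the natural elementary route. Your two rank characterizations are right: $\Delta_T$ automatically satisfies the spanning and count conditions of Definition~\ref{defn:tree}, so membership in $\SST_d(\Delta)$ reduces, via the ``any two of (b), (c), (d)'' remark, to $\HH_d(\Delta_T)=0$, i.e.\ independence of the $T$-columns of $\bd_d$; and since APC gives $\betti_{d-1}(\Delta)=0$, the rational cycle space $Z_{d-1}=\ker\bd_{d-1}$ coincides with $\operatorname{im}\bd_d$ over $\Qq$ and has dimension $f_d-\betti_d=|S|$, so $\HH_{d-1}(\Delta_{\bar S};\Qq)=0$, i.e.\ $Z_{d-1}\cap C_{\bar S}=0$, is equivalent to surjectivity of $\pi_S\circ\bd_d$ and hence to independence of the $S$-rows. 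The block-matrix step works in both directions, and your use of $\operatorname{rank}\bd_d=|S|$ is legitimate because $\betti_d(\Delta)=\dim_\Qq\ker\bd_d$ (there being no $(d+1)$-cells), with no APC needed for that particular equality.

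Two details are left implicit and should be recorded to make the equivalence on the $\bar S$ side airtight. First, $\Delta_{\bar S}\in\SST_{d-1}(\Delta)$ means it is a spanning tree of the skeleton $\Delta_{(k)}$ with $k=d-1$, so besides acyclicity you need the count condition $f_{d-1}(\Delta_{\bar S})=f_{d-1}(\Delta)-\betti_{d-1}(\Delta_{(d-1)})$; this holds automatically because $\betti_{d-1}(\Delta_{(d-1)})=\dim_\Qq Z_{d-1}=|S|$, which is exactly the dimension count you already carry out, and then acyclicity alone certifies tree-ness, just as in your $T$-argument. Second, Definition~\ref{defn:tree} is stated over $\Zz$, while your rank arguments detect vanishing over $\Qq$; this is harmless because the top-dimensional homology groups $\HH_d(\Delta_T;\Zz)$ and $\HH_{d-1}(\Delta_{\bar S};\Zz)$ are subgroups of chain groups and hence torsion-free, so vanishing over $\Qq$ and over $\Zz$ agree. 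With those two sentences added, your proof is complete.
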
 

\begin{proposition}[{\cite[Proposition~2.7]{DKM2}}] \label{detD-formula}
If $\bd_{S,T}$ is nonsingular, then
\[\abs{\det \bd_{S,T}} = 
\frac{\abs{\HH_{d-1}(\Delta_T)} \cdot \abs{\HH_{d-2}(\Delta_{\bar{S}})}}{ \abs{\HH_{d-2}(\Delta)}}.\]
\end{proposition}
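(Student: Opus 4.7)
My plan is to identify $|\det \bd_{S,T}|$ with the order of a relative homology group, and then apply the long exact sequence of the pair $(\Delta_T, \Delta_{\bar{S}})$ to bookkeep orders of finite abelian groups.

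First, I would check that $\Delta_{\bar{S}}$ is a subcomplex of $\Delta_T$, which is immediate since $\Delta_{\bar{S}} = \bar{S} \cup \Delta_{(d-2)} \subseteq \Delta_{(d-1)} \subseteq \Delta_T$. Next I would compute the relative chain complex $C_\bullet(\Delta_T, \Delta_{\bar{S}})$: since $\Delta_T$ and $\Delta_{\bar{S}}$ agree on all cells of dimension at most $d-2$, every relative chain group vanishes except
\[
C_d(\Delta_T, \Delta_{\bar{S}}) = \Zz^T, \qquad C_{d-1}(\Delta_T, \Delta_{\bar{S}}) = \Zz^{\Delta_{d-1}}/\Zz^{\bar{S}} \cong \Zz^S.
\]
Quotienting out the $\bar{S}$-coordinates deletes the rows of $\bd_d$ indexed by $\bar{S}$, so the induced relative boundary map is exactly the submatrix $\bd_{S,T}$. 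By Proposition~\ref{nonsingular-criterion} this matrix is nonsingular, hence $\HH_{d-2}(\Delta_T, \Delta_{\bar{S}}) = 0$ and $\HH_{d-1}(\Delta_T, \Delta_{\bar{S}}) \cong \Zz^S / \bd_{S,T}(\Zz^T)$ is a finite group of order $|\det \bd_{S,T}|$.

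Then I would feed these facts into the long exact sequence of the pair. Using also $\HH_{d-1}(\Delta_{\bar{S}}) = 0$ (the acyclicity axiom for the $(d-1)$-tree $\Delta_{\bar{S}}$) and the identification $\HH_{d-2}(\Delta_T) \cong \HH_{d-2}(\Delta)$ (since $\Delta_T$ and $\Delta$ share the same $(d-1)$-skeleton), the relevant stretch of the sequence collapses to
\[
0 \to \HH_{d-1}(\Delta_T) \to \HH_{d-1}(\Delta_T, \Delta_{\bar{S}}) \to \HH_{d-2}(\Delta_{\bar{S}}) \to \HH_{d-2}(\Delta) \to 0.
\]
Three of the four terms are finite by hypothesis, and exactness forces the fourth, $\HH_{d-2}(\Delta)$, to have rank zero as well; thus all four are finite, and the multiplicativity of orders along a finite exact sequence gives
\[
|\HH_{d-1}(\Delta_T)| \cdot |\HH_{d-2}(\Delta_{\bar{S}})| = |\HH_{d-1}(\Delta_T, \Delta_{\bar{S}})| \cdot |\HH_{d-2}(\Delta)|.
\]
Substituting $|\HH_{d-1}(\Delta_T, \Delta_{\bar{S}})| = |\det \bd_{S,T}|$ and rearranging yields the formula.

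I do not foresee a serious obstacle; the proof is essentially homological bookkeeping once one spots that the relative complex for $(\Delta_T, \Delta_{\bar{S}})$ is exactly what turns $|\det \bd_{S,T}|$ into a homology order. The most delicate step is verifying that the quotient $\Zz^{\Delta_{d-1}}/\Zz^{\bar{S}}$ really identifies with $\Zz^S$ in a way that sends the relative boundary to $\bd_{S,T}$, which follows directly from the definition of $\bd_{S,T}$ as the submatrix with rows $S$ and columns $T$.
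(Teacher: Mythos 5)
Your proof is correct and follows essentially the same route as the source the paper cites for this statement (the paper does not reprove Proposition~\ref{detD-formula} but quotes it from \cite{DKM2}): identify $|\det\bd_{S,T}|$ with $|\HH_{d-1}(\Delta_T,\Delta_{\bar S})|$ via the two-term relative chain complex, then run the long exact sequence of the pair using the tree properties supplied by Proposition~\ref{nonsingular-criterion} and $\HH_{d-2}(\Delta_T)\isom\HH_{d-2}(\Delta)$. Only a cosmetic slip: nonsingularity of $\bd_{S,T}$ is the hypothesis rather than a consequence of Proposition~\ref{nonsingular-criterion}, and $\HH_{d-2}(\Delta_T,\Delta_{\bar S})=0$ holds already because the relative chain group in degree $d-2$ vanishes, independently of nonsingularity.
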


Note that $\det\wbd_{S,T} = (x_T/x_S) \det \bd_{S,T}$, so Propositions~\ref{nonsingular-criterion} and~\ref{detD-formula} immediately imply the following result.

\begin{proposition} \label{weighted-tools}
Let $T\subseteq\Delta_d$ and $S\subseteq\Delta_{d-1}$, with $\abs{T}=\abs{S}=f_d-\betti_d$.  
Then $\det\wbd_{S,T}$ is nonzero
if and only if $\Delta_T\in\SST_d(\Delta)$ and $\Delta_{\bar S}\in\SST_{d-1}(\Delta)$.  In that case,
\[
\pm\det \wbd_{S,T} 
= \frac{\abs{\HH_{d-1}(\Delta_T)} \cdot \abs{\HH_{d-2}(\Delta_{\bar S})}}{ \abs{\HH_{d-2}(\Delta)}}\cdot\frac{x_T}{x_S}.
\]
\end{proposition}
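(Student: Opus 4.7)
The plan is to reduce the claim to Propositions~\ref{nonsingular-criterion} and~\ref{detD-formula} by exploiting the diagonal structure of the weighting matrices~$D_k$. First I would observe that since $\wbd_d = D_{d-1}^{-1}\bd_d D_d$ with $D_{d-1}^{-1}$ and $D_d$ diagonal (indexed respectively by $\Delta_{d-1}$ and $\Delta_d$), restricting to rows~$S$ and columns~$T$ simply selects the corresponding diagonal blocks of the weight matrices, so
\[
\wbd_{S,T} = (D_{d-1}^{-1})_{S,S}\, \bd_{S,T}\, (D_d)_{T,T}.
\]
Taking determinants then yields
\[
\det \wbd_{S,T} \;=\; \frac{x_T}{x_S}\, \det \bd_{S,T},
\qquad x_T := \prod_{\sigma\in T} x_\sigma,\ \ x_S := \prod_{\sigma\in S} x_\sigma,
\]
which is precisely the identity flagged in the paragraph preceding the proposition.

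Next I would note that $x_T/x_S$ is a unit in the Laurent polynomial ring $R=\Zz[\xx,\xx^{-1}]$, so $\det \wbd_{S,T}$ is nonzero if and only if $\det \bd_{S,T}$ is nonzero. Invoking Proposition~\ref{nonsingular-criterion} converts this to the condition $\Delta_T\in\SST_d(\Delta)$ and $\Delta_{\bar S}\in\SST_{d-1}(\Delta)$, which is the first claim. Under this hypothesis, Proposition~\ref{detD-formula} gives
\[
|\det \bd_{S,T}| \;=\; \frac{|\HH_{d-1}(\Delta_T)|\cdot |\HH_{d-2}(\Delta_{\bar S})|}{|\HH_{d-2}(\Delta)|},
\]
and multiplying through by $x_T/x_S$ produces the formula in the statement (up to sign), since $x_T/x_S$ is already a signed monomial.

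There is no real obstacle here: all the topological substance is already packaged inside the two cited propositions, and the weighted-to-unweighted conversion is a purely mechanical observation about conjugation by diagonal matrices. The only mild subtlety is sign bookkeeping: the ratio $x_T/x_S$ enters unambiguously, but $\det\bd_{S,T}$ itself comes with a sign depending on how rows and columns are ordered, which is exactly why the statement carries a $\pm$ on the left-hand side rather than a plain equality.
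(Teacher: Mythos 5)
Your proposal is correct and is essentially the paper's argument: the paper proves this proposition by the single observation that $\det\wbd_{S,T}=(x_T/x_S)\det\bd_{S,T}$ (from the diagonal conjugation), after which Propositions~\ref{nonsingular-criterion} and~\ref{detD-formula} apply directly, exactly as you do. Your additional remarks about $x_T/x_S$ being a unit and about the sign are fine, routine elaborations of the same reasoning.
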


With these tools in hand, we can now prove Theorem~\ref{thm:CMTT}, following~\cite[Theorems~2.8(1) and~2.12(1)]{DKM2}.

\begin{proof}[Proof of Theorem~\ref{thm:CMTT}]
It is enough to prove the case $k=d$; the general case then follows by replacing~$\Delta$ with its $k$-skeleton $\Delta^{(k)}$, which is also APC and satisfies $\hat{\tau}_j(\Delta^{(k)})=\hat{\tau}_j(\Delta)$ for all $j\leq k$.  By the Binet-Cauchy formula and Proposition~\ref{weighted-tools}, we have
\begin{align*}
\hat{\pi}_d(\Delta)
~&= \sum_{T:\Delta_T\in\SST_d(\Delta)}\ \ \sum_{S:\Delta_{\bar{S}}\in\SST_{d-1}(\Delta)} (\det \wbd_{S,T})^2\\
&= \sum_{\Delta_T\in\SST_d}\ \ \sum_{\Delta_{\bar{S}}\in\SST_{d-1}}
\left(\frac{\abs{\HH_{d-1}(\Delta_T)}\cdot\abs{\HH_{d-2}(\Delta_{\bar{S}})}}{\abs{\HH_{d-2}(\Delta)}}\cdot\frac{x_T}{x_S}\right)^2\\
&= \frac{1}{\abs{\HH_{d-2}(\Delta)}^2\ X_{(d-1)}}
\left(\sum_{\Delta_T\in\SST_d} \abs{\HH_{d-1}(\Delta_T)}^2\; X_T\right) 
\left(\sum_{\Delta_{\bar{S}}\in\SST_{d-1}} \abs{\HH_{d-2}(\Delta_{\bar{S}})}^2\; X_{\bar{S}}\right)\\
&= \frac{\hat{\tau}_d(\Delta)\; \hat{\tau}_{d-1}(\Delta)}{\abs{\HH_{d-2}(\Delta)}^2\; X_{(d-1)}}.\qedhere
\end{align*}
\end{proof} 

The case $\HH_k(\Delta;\Zz)=0$ for all $k<\dim\Delta$ (so that the correction term $1/\abs{\HH_k(\Delta)}^2$ is trivial) is Theorem~5.3 of~\cite{MMRW}.

\begin{corollary}\label{improved-matrix-tree-thmforgraphs}
Let $G$ be a connected graph on vertex set $[n]$.  Then
\[\hat{\tau}(G)=\frac{X_1\cdots X_n}{X_1+\cdots+X_n}\hat{\pi}_1(G)=\frac{X_1\cdots X_n}{X_1+\cdots+X_n}\pdet\Lwud_0(G).\]
\end{corollary}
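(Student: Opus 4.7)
The plan is to specialize Theorem~\ref{thm:CMTT} to the case $\Delta = G$, $k=1$, and $d=1$, then solve algebraically for $\wtau(G) = \wtau_1(G)$. Since $G$ is connected, $\betti_0(G) = 0$, so $\HH_0(G;\Qq) = 0$ and the APC hypothesis holds. With $k=1$, Theorem~\ref{thm:CMTT} gives
\[
\wpi_1(G) = \frac{\wtau_1(G)\;\wtau_0(G)}{|\HH_{-1}(G)|^2 \; X_{(0)}},
\]
so once the three ``easy'' factors on the right are identified, rearranging produces the claimed formula.

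The next step is to evaluate each of those factors. First, $X_{(0)} = \prod_{v \in \Delta_0} X_v = X_1 \cdots X_n$ by definition. Second, $|\HH_{-1}(G;\Zz)| = 1$: since $G$ is nonempty, the reduced $(-1)$st homology is trivial, contributing a factor of $1$. Third, we identify $\wtau_0(G)$: as noted right after Definition~\ref{defn:tree}, $\SST_0(\Delta) = \Delta_0$, and for each singleton $\Upsilon = \{v\}$ the group $\HH_{-1}(\{v\};\Zz)$ is trivial of order $1$, so that
\[
\wtau_0(G) \;=\; \sum_{v \in V(G)} |\HH_{-1}(\{v\};\Zz)|^2 \, X_v \;=\; X_1 + \cdots + X_n.
\]

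Substituting these values and $\wtau_1(G) = \wtau(G)$ into the displayed formula and solving for $\wtau(G)$ yields
\[
\wtau(G) \;=\; \frac{X_1 \cdots X_n}{X_1 + \cdots + X_n}\,\wpi_1(G),
\]
which is the first equality. The second equality is simply the definition $\wpi_1(G) = \pdet \Lwud_0(G)$ from~\eqref{define-pik}.

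There is no real obstacle here; this is a direct specialization. The only subtle point worth verifying explicitly is the book-keeping of the degenerate invariants $\wtau_0$ and $|\HH_{-1}|$, which could look suspicious at first glance but are both pinned down by the convention that the trivial group has order $1$ and by the explicit identification $\SST_0(G) = \Delta_0$.
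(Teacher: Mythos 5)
Your argument is correct and matches the paper's proof: both specialize Theorem~\ref{thm:CMTT} with $d=k=1$, use $\wtau_0(G)=X_1+\cdots+X_n$, $X_{(0)}=X_1\cdots X_n$, and the triviality of $\HH_{-1}(G)$, then solve for $\wtau_1(G)$. The second equality is indeed just the definition of $\wpi_1$ in~\eqref{define-pik}, exactly as in the paper.
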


\begin{proof}
We may regard $G$ as a cell complex with $d=1$.  The 0-trees are just the vertices, so $\hat{\tau}_{d-1}(G)=X_1+\cdots+X_n$ and $X_{(d-1)}=X_1\cdots X_n$.  Moreover, $\HH_{-1}(G)=0$.  Now solving for $\hat{\tau}_d$ in Theorem~\ref{thm:CMTT} yields the desired formula.
\end{proof}

\section{Complete colorful complexes} \label{section:ccc}

In order to enumerate trees in complete colorful complexes, we will exploit the fact that these complexes are precisely the joins of 0-dimensional complexes.  Accordingly, we begin by recalling some facts about the join operation and its effect on Laplacian spectra.

Let $\Delta_1,\dots,\Delta_r$ be simplicial complexes on pairwise disjoint vertex sets $V_1,\dots,V_r$.  Their \emph{join} is the simplicial complex on vertex set $V_1 \disun\cdots\disun V_r$ given by
\[\Delta ~=~ \Delta_1 * \cdots * \Delta_r ~=~ \{\sigma = \sigma_1 \disun\cdots\disun \sigma_r\st \sigma_1 \in \Delta_1,\; \dots,\; \sigma_r \in \Delta_r\}.\]
The join of two shellable complexes is shellable by \cite[Corollary 2.9]{dress}.  In addition, Duval and Reiner~\cite[Theorem~4.10]{Duval-Reiner} showed that if $\Delta=\Delta_1*\Delta_2$, then
\[\stot_k(\Delta) = \multiset{\lambda_1+\lambda_2 \st 
\lambda_1\in \stot_{k_1}(\Delta_1),\ \lambda_2\in \stot_{k_2}(\Delta_2),\ k_1+k_2=k-1}.\]
This result may be obtained by observing that the simplicial chain complex $C_\bullet(\Delta;\Zz)$ can be identified with $C_\bullet(\Delta_1;\Zz)\otimes C_\bullet(\Delta_2;\Zz)$ (with an appropriate shift in homological degree, since $\dim(\sigma_1\cup \sigma_2)=\dim \sigma_1+\dim \sigma_2+1$).  This identification extends easily to $r$-fold joins, and it carries over to  weighted chain complexes (see Definition~\ref{weighted-boundary}) provided that the faces of $\Delta_1 * \cdots * \Delta_r $ are weighted by $x_{\sigma_1\disun\cdots\disun \sigma_r} = x_{\sigma_1} \cdots x_{\sigma_r}$. That is,
\[C_\bullet(\Delta;R_1\otimes_\Zz\cdots\otimes_\Zz R_r)=
C_\bullet(\Delta_1;R_1)\otimes\cdots\otimes C_\bullet(\Delta_r;R_r)\]
where $R_i$ is the ring of Laurent polynomials in the weights of faces of $\Delta_i$.  As a consequence, we obtain the following general formula for the weighted Laplacian spectrum of a join:
\begin{equation} \label{weighted-eigenvalues-of-join}
\Swtot_k(\Delta) = \bigmultiset{\lambda_1+\cdots+\lambda_r \st \lambda_q\in\Swtot_{k_q}(\Delta_q) \ \forall q, \ \ \textstyle\sum_{q=1}^r k_q = k-r+1}.
\end{equation}

We now introduce complete colorful complexes (or ``complete multipartite complexes''), which were Bolker's original motivation for introducing simplicial spanning trees in~\cite{Bolker}  and were studied in detail by Adin~\cite{Adin}.

\begin{definition} \label{defn:ccc}
Let $V_1=\{v_{1,1},\dots,v_{1,n_1}\},\dots,V_r=\{v_{r,1},\dots,v_{r,n_r}\}$ be finite sets of vertices, and let $\bar K_{V_q}$ denote the edgeless graph (= 0-dimensional complex) on vertex set $V_q$.  The \emph{complete colorful complex} $\Delta=\Delta_{n_1,\ldots,n_r}$ is the simplicial join $\bar K_{V_1}*\cdots*\bar K_{V_r}$ on vertex set $V=V_1\cup\cdots\cup V_r$.  We regard the indices $1,\dots,r$ as colors, so that the facets (resp., faces) of $\Delta$ are the sets of vertices having exactly one (resp., at most one) vertex of each color.
\end{definition}

The simplicial complex $\Delta$ is shellable because it is the join of shellable complexes.  In particular, the homology groups $\HH_k(\Delta;\Zz)$ vanish for all $k<\dim \Delta=r-1$.  Note that $\Delta$ is also the clique complex of the complete multipartite graph $K_{n_1,\dots,n_r}$. 

For the rest of the section, we fix the notation of Definition~\ref{defn:ccc}.  Let $\{x_{q,i} \st q\in[r],\ i\in[n_q]\}$ be a family of commuting indeterminates corresponding to the vertices of $\Delta$, and set $x_\sigma=\prod_{v_{q,i}\in \sigma} x_{q,i}$ and $X_\sigma=x_\sigma^2$.  Also, for each $q\in[r]$, define
\begin{align*}
S_q&=\sum_{i=1}^{n_q}X_{q,i}, &
P_q&=\prod_{i=1}^{n_q}X_{q,i}.
\end{align*}

\begin{lemma}\label{colorful-eigenvalues}
The eigenvalues of $\Lwtot_k(\Delta)$ are all of the form $\lambda_J=\sum_{q\notin J}S_q$, where $J$ ranges over all subsets of $[r]$ of size at most $k+1$.  The multiplicity of each $\lambda_J$ is $\binom{r-\abs{J}}{k+1-\abs{J}}\prod_{t\in J}(n_t-1)$.
\end{lemma}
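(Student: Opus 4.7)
The plan is to exploit the join decomposition $\Delta = \bar K_{V_1} * \cdots * \bar K_{V_r}$ and apply the spectral formula \eqref{weighted-eigenvalues-of-join} for weighted total Laplacians of joins. First I would pin down the spectrum of each 0-dimensional factor $\bar K_{V_q}$, whose only nontrivial weighted boundary is the $1 \times n_q$ row vector $\wbd_0 = (x_{v_{q,1}}, \dots, x_{v_{q,n_q}})$. A direct calculation gives $\Lwud_{-1} = (S_q)$, while $\Lwdu_0 = \wcbd_0 \wbd_0$ is a rank-one symmetric $n_q \times n_q$ matrix of trace $S_q$; the remaining Laplacians $\Lwud_0$ and $\Lwdu_{-1}$ vanish because $C_1 = C_{-2} = 0$. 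Hence
\[
\Swtot_{-1}(\bar K_{V_q}) = \multiset{S_q}, \qquad \Swtot_0(\bar K_{V_q}) = \multiset{S_q;\ 0 \colon n_q - 1},
\]
and $\Swtot_{k_q}(\bar K_{V_q})$ is empty otherwise.

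Substituting these spectra into \eqref{weighted-eigenvalues-of-join}, each $k_q$ must lie in $\{-1, 0\}$, so the constraint $\sum k_q = k - r + 1$ forces the set $A = \{q : k_q = -1\}$ to have cardinality exactly $r - k - 1$. For $q \in A$ the value $\lambda_q = S_q$ is the only option, while for $q \notin A$ the spectrum $\Swtot_0(\bar K_{V_q})$ allows either $\lambda_q = S_q$ or $\lambda_q = 0$ (the latter with multiplicity $n_q - 1$). Letting $J \subseteq [r] \sm A$ record the indices where $\lambda_q = 0$, the resulting eigenvalue is $\sum_{q \notin J} S_q = \lambda_J$.

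To finish, I would collect the multiplicity of a fixed $\lambda_J$ by summing over all tuples that produce it. Since $J \cap A = \emptyset$, the admissible $A$'s are the $(r-k-1)$-subsets of $[r] \sm J$, and there are $\binom{r - |J|}{r - k - 1} = \binom{r - |J|}{k + 1 - |J|}$ of them (automatically zero unless $|J| \le k+1$). Each such tuple contributes multiplicity $\prod_{q \in J}(n_q - 1)$ via the join formula, namely one factor of $n_q - 1$ for each chosen zero eigenvalue and trivial factors elsewhere. Multiplying yields the stated formula.

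The only real care needed is to disentangle two different sources of ``zero'' in the calculation: the zero eigenvalues of $\Lwdu_0(\bar K_{V_q})$, which are parameterized by $J$, versus the vanishing of $\Swtot_{k_q}(\bar K_{V_q})$ for $k_q \notin \{-1, 0\}$, which is what pins down $|A|$. Once those roles are kept separate, the result reduces to a short binomial count, and as a sanity check the total multiplicity $\sum_J \binom{r-|J|}{k+1-|J|}\prod_{q \in J}(n_q-1)$ collapses via $n_q = (n_q - 1) + 1$ to $\sum_{|K|=k+1}\prod_{q \in K} n_q = f_k(\Delta)$, as required.
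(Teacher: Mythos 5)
Your proposal is correct and follows essentially the same route as the paper: compute $\Swtot_{-1}(\bar K_{V_q})=\multiset{S_q}$ and $\Swtot_0(\bar K_{V_q})=\multiset{S_q;\ 0\colon n_q-1}$ for each $0$-dimensional factor, feed these into the join spectrum formula \eqref{weighted-eigenvalues-of-join}, and count tuples by choosing which $k+1-|J|$ indices outside $J$ contribute the nonzero eigenvalue in degree $0$, yielding $\binom{r-|J|}{k+1-|J|}\prod_{t\in J}(n_t-1)$. The closing sanity check that the multiplicities sum to $f_k(\Delta)$ is a nice extra but not needed.
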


\begin{proof}
For $q\in[r]$, let $\Delta_q$ be the 0-dimensional complex with vertices $V_q$.  Its weighted coboundary $\wcbd_0(\Delta_q)$ is represented by the column vector $U_q=(x_{q,1},\;\dots,\;x_{q,n_q})$.  Thus
\begin{align*}
\Lwtot_{-1}(\Delta_q) &= \Lwud_{-1}(\Delta_q) = U_q^T U_q, &
\Lwtot_0(\Delta_q) &= \Lwdu_0(\Delta_q) = U_q U_q^T.
\end{align*}
The first of these is the $1\x 1$ matrix with entry $S_q$.  The second is a $n_q\x n_q$ matrix of rank $1$; its nonzero eigenspace is spanned by $U_q$, with eigenvalue $U_q^T U_q = S_q$.  Hence
\begin{align*}
\Swtot_{-1}(\Delta_q) &= \multiset{S_q},  &
\Swtot_0(\Delta_q) &= \multiset{S_q;\ 0\colon n_q-1}
\end{align*}
and applying~\eqref{weighted-eigenvalues-of-join} gives
\[\Swtot_k(\Delta) = \bigmultiset{
\lambda_1 + \cdots + \lambda_r \st \ \
\begin{array}{l}
\lambda_q\in \Swtot_{k_q}(\Delta_q) \ \ \forall q,\\
k_q \in \{-1,0\}\ \ \ \,\forall q,\\
\lvert\{q\st k_q=0\}\rvert=k+1
\end{array} }.
\]
In particular, the eigenvalues of~$\Lwtot_k(\Delta)$ are all of the form $\lambda_J$.  Their multiplicities are as claimed because each instance of $\lambda_J$ arises by choosing one of the $n_q-1$ copies of the zero eigenvalue from $\Swtot_0(\Delta_q)$ for each $q\in J$, and then choosing the remaining $k+1-\abs{J}$ indices $q$ for which $k_q=0$.
\end{proof}

For $k\in\Zz$ and $1 \leq q \leq r$,  define 
\begin{equation} \label{Ekq}
E_{k,q}=\sum_{\substack{J\subseteq [r]\sm\{q\}\\ \abs{J} \leq k-1}}(-1)^{k-1-\abs{J}}\prod_{t\in J}n_t.
\end{equation}
Note that each $E_{k,q}$ is symmetric in $n_1,\dots,\widehat{n_q},\dots,n_r$; specifically, it is the sum of all monomials of degree at most $k-1$ in the expansion of $(-1)^{r-k}\prod_{i\in [r]\sm\{q\}}(n_i-1)$.

We now restate and prove the main theorem of this section, introducing additional notation that will make the proof easier.

\medskip

\noindent {\bf Theorem~\ref{complete-colorful-theorem}.}
\emph{Let $\Delta=\Delta_{n_1,\dots,n_r}$ be a complete colorful complex,
where $r,n_1,\dots,n_r$ are positive integers.  Then for all $0 \leq k\leq r-1$, we have
\[
\hat{\tau}_k(\Delta) = A_kB_k
\]
where
\begin{align*}
A_k &= \prod_{q=1}^r P_q^{E_{k,q}}, & 
B_k &= \prod_{\substack{J\subseteq [r]\\ \abs{J}\leq k}}
  \Big(\sum_{q \notin J} S_q \Big)^{\binom{r-2-\abs{J}}{k-\abs{J}} \prod_{t \in J}(n_t - 1)}.
\end{align*}}
\medskip

Note that when $k=r-1$, the expression for $B_k$ includes the nonstandard binomial coefficient $\binom{r-2-\abs{J}}{r-1-\abs{J}}$ (recall our conventions on binomial coefficients from~\eqref{binomial-coeffs}).

\begin{proof}
The proof proceeds by induction on $k$; the base cases are $k=-1$ and $k=0$.

{\bf Base cases:}\  When $k=-1$, we have $\hat{\tau}_{-1}=1$ and $A_{-1}=B_{-1}=1$ (adopting the standard convention that an empty product equals 1).  When $k=0$, we have $E_{0,q}=0$, so $A_0=1$ and $B_0=\sum_{q=1}^r S_q = \sum_{v_{q,j}\in V}X_{q,j}$, which equals $\hat{\tau}_0(\Delta)$ because the 0-trees of a complex are just its vertices. 

{\bf Inductive step:}\ Let $k\geq 0$.  Then Theorem~\ref{thm:CMTT} implies that
\begin{equation} \label{from-CMTT}
\hat{\tau}_{k+1}  \hat{\tau}_{k}^2 \hat{\tau}_{k-1} = X_{(k)}X_{(k-1)}\hat{\pi}_{k+1}\hat{\pi}_k
\end{equation}
(recall that all torsion factors equal 1 since $\Delta$ is shellable).  In order to show that $\hat{\tau}_{k+1}=A_{k+1}B_{k+1}$, it suffices to show that $A_kB_k$ satisfies this same recurrence, i.e., that
\begin{equation} \label{t-recurrence}
(A_{k+1}  A_k^2 A_{k-1})(B_{k+1}  B_k^2 B_{k-1})
~=~ X_{(k)}X_{(k-1)}\hat{\pi}_{k+1}\hat{\pi}_k.
\end{equation}

First, define
\[
F_{k,q}=\sum_{\substack{J\subseteq [r]\sm\{q\}\\ \abs{J}=k}} \ \prod_{t\in J} n_t
\] 
so that $X_{(k)}=\prod_{q=1}^r P_q^{F_{k,q}}$. 
Moreover, for $1 \leq k \leq r-1$ and $1 \leq q \leq r$, we have
$E_{k,q} + E_{k-1,q} = F_{k-1,q}$.
Then the definition of $A_k$ implies that
\begin{equation} \label{Ak-result}
A_{k+1} A_k^2 A_{k-1} 
= \prod_{q=1}^r P_q^{E_{k+1,q} + 2E_{k,q} + E_{k-1,q}} 
= \prod_{q=1}^r P_q^{F_{k,q} + F_{k-1,q}}
= X_{(k)} X_{(k-1)}.
\end{equation}

Second, for $J\subseteq[r]$, define $\lambda_J=\sum_{q\notin J}S_q$ (as in Lemma~\ref{colorful-eigenvalues}) and $\mu_J = \lambda_J^{\prod_{t \in J}(n_t-1)}$.
Then
\begin{align}
B_{k+1} B_k^2 B_{k-1} 
 &= \prod_{\substack{J\subseteq [r]\\ \abs{J}=j \leq k-1}} 
      \mu_J^{\binom{r - 2 - j}{k+1-j} + 2\binom{r - 2 - j}{k - j} + \binom{r - 2 - j}{k-1-j} } \notag\\
      &\qquad\qquad\x \prod_{\substack{J\subseteq [r]\\ \abs{J}=k}}
      \mu_J^{\binom{r-2-k}{k+1-k}+2\binom{r-2-k}{0}}
    \prod_{\substack{J\subseteq [r]\\ \abs{J}=k+1}} 
      \mu_J^{\binom{r-k-3}{0}} \notag\\
 &= \prod_{\substack{J\subseteq [r]\\ \abs{J}=j\leq k-1}} 
      \mu_J^{\binom{r - j}{k+1-j}} 
    \prod_{\substack{J\subseteq [r]\\ \abs{J}=k}} \mu_J^{r-k}
    \prod_{\substack{J\subseteq [r]\\ \abs{J}=k+1}} \mu_J \notag\\
 &= \prod_{\substack{J\subseteq [r]\\ \abs{J}=j \leq k+1}}
      \mu_J^{\binom{r - j}{k+1-j}} \notag\\
 &= \prod_{\substack{J\subseteq [r]\\ \abs{J}=j \leq k+1}} 
      \lambda_J^{\binom{r - j}{k+1-j}\prod_{t \in J}(n_t-1)} \notag\\
 &=\ \pdet \Lwtot_k(\Delta) ~=~ \hat{\pi}_k \hat{\pi}_{k+1} \label{Bk-result}
\end{align}
by Lemma~\ref{colorful-eigenvalues}, Proposition~\ref{spectrum-identities}, and the definition of $\hat{\pi}_k$ in~\eqref{define-pik}.  Now combining~\eqref{Ak-result} and~\eqref{Bk-result} yields the desired recurrence~\eqref{t-recurrence}.
\end{proof}

\begin{remark}
Kalai's formula~\eqref{Kalai-weight} now follows as a corollary, by regarding the simplex on~$n$ vertices as the complete colorful complex with $n$ vertices, all of different colors.
\end{remark}

\begin{remark}
Setting $k=1$ gives the degree-weighted tree count for the complete multipartite graph $K_{n_1,\dots,n_r}$.  Here $E_{1,q}=1$, so the theorem gives
\begin{align*}
\hat{\tau}_1=A_1B_1 &= \left(\prod_{q=1}^r P_q\right)
\left(\prod_{\substack{J\subseteq [r]\\ \abs{J}\leq 1}}
\lambda_J^{\binom{r-2-\abs{J}}{1-\abs{J}} \prod_{t \in J}(n_t - 1)}\right) \\
&= \left(\prod_{v_{q,j}\in V} X_{q,j}\right)
\left(\sum_{v_{q,j}\in V} X_{q,j}\right)^{r-2}
\left( \prod_{t=1}^r \Big(\sum_{q\neq t} S_q \Big)^{(n_t - 1)} \right),
\end{align*}
which recovers a theorem of Clark~\cite[Theorem~2]{Clark}.  Further specializing all the indeterminates to 1 and writing $n=n_1+\cdots+n_r$ yields the unweighted tree count
\[\tau_1(K_{n_1,\dots,n_r}) ~=~ n^{r-2}  \prod_{t=1}^r (n-n_t)^{(n_t - 1)},\]
a well-known formula that has been rediscovered many times; the earliest source appears to be Austin~\cite{Austin}.  
\end{remark}

\begin{remark}
Consider the special case $k=r-1$.  Here, we have
\[A_{r-1} = \prod_{q=1}^r P_q^{E_{r-1},q} ~=~ \prod_{q=1}^r P_q^{ \left(\prod_{t\neq q}n_t\right)-\left(\prod_{t\neq q}(n_t-1) \right)}.\]
To simplify $B_{r-1}$, note that our conventions on binomial coefficients imply that
\[\binom{r-2-j}{r-1-j}=\begin{cases} 0 & \text{ if } j<r-1,\\ 1 &\text{ if } j=r-1,\end{cases}\]
so that
\[B_{r-1} = \prod_{\substack{J\subset[r] \\ \abs{J}=r-1}} \lambda_J^{\prod_{t\in J}(n_t-1)}.\]
Thus, we find
\[\hat{\tau}_{r-1}(\Delta_{n_1,\ldots,n_r})=
\prod_{q=1}^r\bigg(P_q^
{\prod_{i\neq q}n_i-\prod_{i\neq q}(n_i-1)}\bigg)
\bigg(S_q^{\prod_{i\neq q}(n_i-1)}\bigg).
\]
\end{remark}

\begin{example} \label{exa:octahedron}
The complete colorful complex $\Delta_{2,2,2}$ is the boundary of an octahedron.  By Theorem~\ref{complete-colorful-theorem}, its degree-weighted tree counts are
\begin{align*}
\hat{\tau}_2 &= (X_{1,1}X_{1,2}X_{2,1}X_{2,2}X_{3,1}X_{3,2})^3 S_1 S_2 S_3,\\
\hat{\tau}_1 &= X_{1,1}X_{1,2}X_{2,1}X_{2,2}X_{3,1}X_{3,2} (S_2 + S_3) (S_1 + S_3) (S_1 + S_2) (S_1 + S_2 + S_3),
\end{align*}
where $S_q=X_{q,1} + X_{q,2}$.
\end{example}

\begin{example}
Let $K_n$ be the complete graph on $n$ vertices $x_1,\dots,x_n$, and let $\bar K_2$ be the empty graph on 2 vertices $y_1, y_2$.  Their join $\Delta=K_n*\bar K_2$ is a 2-dimensional simplicial complex;  topologically, it is the suspension of $K_n$.  While $\Delta$ is not a skeleton of any complete colorful complex, we can still use the formula~\eqref{weighted-eigenvalues-of-join} to compute its weighted Laplacian eigenvalues and thus enumerate its trees.  Of course, it is possible to extract the weighted tree counts $\hat{\tau}_k$ directly from the spectra of the total Laplacians, since $\hat{\tau}_{k+1}  \hat{\tau}_{k}^2 \hat{\tau}_{k-1} = X_{(k)}X_{(k-1)}\pdet \Lwtot_k(\Delta)$ by \eqref{from-CMTT} and~\eqref{Bk-result}, but in this case it is a little more convenient to work with the up-down Laplacian spectra.

Let $S_X = X_1+\cdots+X_n$ and $S_Y=Y_1+Y_2$.  It is easy to verify (e.g., by Lemma~\ref{colorful-eigenvalues}) that
\begin{align*}
\Swtot_{-1}(\bar K_2) &= \multiset{S_Y},		& \Swtot_{-1}(K_n) &= \multiset{S_X},\\
\Swtot_{0}(\bar K_2) &= \multiset{0;\ S_Y},	& \Swtot_0(K_n) &= \multiset{S_X\colon n},\\
&								& \Swtot_1(K_n) &= \multiset{S_X\colon n-1;\ 0\colon \tbinom{n}{2}-(n-1)}.
\end{align*}
Noting that $\binom{n}{2}-(n-1)=\binom{n-1}{2}$ and applying the join formula~\eqref{weighted-eigenvalues-of-join}, we get
\begin{align*}
\Swtot_{-1}(\Delta) &= \multiset{S_X + S_Y},\\
\Swtot_0(\Delta) &= \multiset{S_X + S_Y\colon n+1;\ S_X},\\
\Swtot_1(\Delta) &= \multiset{S_Y\colon \tbinom{n-1}{2};\ S_X + S_Y\colon 2n-1;\ S_X\colon n},\\
\Swtot_2(\Delta) &= \multiset{0\colon \tbinom{n-1}{2};\ S_X\colon n-1;\ S_Y\colon \tbinom{n-1}{2};\ S_X + S_Y\colon n-1},
\end{align*}
and thus, by Proposition~\ref{spectrum-identities}, we have
\begin{align*}
\Swud_{-1}(\Delta) &= \multiset{S_X + S_Y},\\
\Swud_0(\Delta) &\circeq \multiset{S_X + S_Y\colon n;\ S_X},\\
\Swud_1(\Delta) &\circeq \multiset{S_Y\colon \tbinom{n-1}{2};\ S_X + S_Y\colon n-1;\ S_X\colon n-1},
\end{align*}
and therefore
\[
\hat{\pi}_0(\Delta) = S_X + S_Y,\quad \hat{\pi}_1(\Delta) = S_X(S_X + S_Y)^n,\quad
\hat{\pi}_2(\Delta) = S_X^{n-1} S_Y^{\tbinom{n-1}{2}} (S_X + S_Y)^{n-1}.
\]

We can now write down the degree-weighted tree counts $\hat{\tau}_k(\Delta)$.  The $0$-dimensional trees are vertices, so $\hat{\tau}_0(\Delta) = S_X+S_Y = X_1 + \cdots + X_n + Y_1 + Y_2$, and then applying Theorem~\ref{thm:CMTT} gives
\begin{align*}
\hat{\tau}_1 &= (X_1 \cdots X_n) (Y_1 Y_2) (X_1 + \cdots + X_n + Y_1 + Y_2)^{n-1} (X_1 + \cdots + X_n),\\
\hat{\tau}_2 &= (X_1 \cdots X_n)^n (Y_1 Y_2)^{n-1}(X_1 + \cdots + X_n)^{n-2} (Y_1 + Y_2)^{\tbinom{n-1}{2}}.
\end{align*}

An open question is to simultaneously generalize this example and Theorem~\ref{complete-colorful-theorem} by giving a formula for weighted Laplacian spectra of arbitrary joins of skeleta of simplices, i.e., complexes of the form 
$K_{n_1}^{d_1}*\cdots*K_{n_r}^{d_r}$.
\end{example}

\section{Hypercubes} \label{section:hypercubes}

In this section we prove a formula (Theorem~\ref{thm:cube-tree}) for weighted enumeration of trees in the hypercube~$Q_n$.  This formula was proposed in~\cite[Conjecture~4.3]{DKM2}, and the special case $k=1$ (enumerating trees in hypercube graphs) is equivalent to~\cite[Theorem~3]{JLM-VR-Facto}.  We begin by describing the cell structure of~$Q_n$.

Let $n$ be a nonnegative integer.  The \emph{hypercube} $Q_n$ is the topological space $[0,1]^n\subset\Rr^n$ made into a regular cell complex with $3^n$ cells, each identified with an $n$-tuple $\sigma=(\sigma_1,\ldots,\sigma_n)$, where each $\sigma_i\in\{[0,1],0,1\}$.  Thus $\dim\sigma=\abs{\{i\st \sigma_i=[0,1]\}}$, and the $f$-polynomial of~$Q_n$ is
\[f(Q_n;t) = \sum_{\sigma\in Q_n} t^{\dim\sigma} = (t+2)^n.\]
Since $Q_n$ is contractible, for every $k<d\leq n$, we have
\begin{equation} \label{Qn-no-torsion}
\HH_k(Q_{n,d};\Zz)=\HH_k(Q_n;\Zz)=0
\end{equation}
where $Q_{n,d}$ denotes the $d$-skeleton of $Q_n$.

Let $q_1,\dots,q_n,y_1,\dots,y_n,z_1,\dots,z_n$ be commuting indeterminates.  For $A\subseteq[n]$, let $q_A=\prod_{j\in A} q_j$, and define $y_A$ and $z_A$ similarly.  Assign each face $\sigma\in Q_n$ the weight $X_\sigma=q_Ay_Bz_C$, where $A=\{i\st \sigma_i=[0,1]\}$, $B=\{i\st \sigma_i=0\}$, and $C=\{i\st \sigma_i=1\}$.

We will need the following simple combinatorial identity.

\begin{lemma} \label{standardized-identity}
Let $N,K$ be integers with $N\geq 0$.  Then
\begin{equation} \label{binomial-identity-eqn}
\sum_{i=K}^{N}\binom{i}{K}\binom{N}{i}~=~\binom{N}{K} 2^{N-K}.
\end{equation}
\end{lemma}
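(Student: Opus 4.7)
The plan is to reduce \eqref{binomial-identity-eqn} to a single application of the binomial theorem by means of the classical ``subset-of-subset'' (trinomial revision) identity
\[
\binom{N}{i}\binom{i}{K} \;=\; \binom{N}{K}\binom{N-K}{i-K},
\]
which holds for integers $0 \le K \le i \le N$. Both sides count pairs $(S,T)$ with $S \subseteq T \subseteq [N]$, $|S|=K$, and $|T|=i$: the left-hand side chooses $T$ first and then $S \subseteq T$, while the right-hand side chooses $S$ first and then the $(i-K)$-element set $T\setminus S\subseteq [N]\setminus S$. This identity is also immediate from the factorial formula.

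First I would substitute this identity into the left-hand side of~\eqref{binomial-identity-eqn}, pull $\binom{N}{K}$ (which does not depend on $i$) out of the sum, and reindex by $j = i - K$:
\[
\sum_{i=K}^{N}\binom{i}{K}\binom{N}{i} \;=\; \binom{N}{K}\sum_{j=0}^{N-K}\binom{N-K}{j} \;=\; \binom{N}{K}\,2^{N-K},
\]
with the last equality being the binomial theorem. This handles the principal range $0 \le K \le N$.

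It remains only to address the edge cases allowed by the conventions in~\eqref{binomial-coeffs}. If $K > N$ the summation range is empty and $\binom{N}{K} = 0$, so both sides vanish. If $K < 0$, the lemma is not actually needed for the hypercube application (where $K = k-1 \ge 0$), but one can verify the identity directly from the Pascal recurrence for $\binom{n}{k}$ in~\eqref{binomial-coeffs}; alternatively, the statement may simply be restricted to $K \ge 0$. The ``main obstacle'' here is essentially nonexistent: the identity is a textbook one, and the only mild care required is bookkeeping around the nonstandard binomial coefficient conventions adopted in the paper.
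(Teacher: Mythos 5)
Your proof is correct. The paper proves the identity in one stroke by double counting: both sides count triples $(A,B,C)$ with $A\disun B\disun C=[N]$ and $|A|=K$, where the summation index $i$ is $|A\cup B|$ (choose $A\cup B$, then $A$ inside it, versus choose $A$, then send each remaining element to $B$ or $C$). Your route is the algebraic unpacking of the same idea: trinomial revision $\binom{N}{i}\binom{i}{K}=\binom{N}{K}\binom{N-K}{i-K}$ followed by the binomial theorem, with the factor $2^{N-K}$ arising from $\sum_j\binom{N-K}{j}$ rather than from the $B/C$ choice directly. Both are complete; the paper's version is shorter and handles the picture in one sentence, while yours is more mechanical and has the small virtue of treating the degenerate ranges explicitly -- your observations that both sides vanish for $K>N$ and for $K<0$ (the latter matching the paper's remark, and consistent with the convention~\eqref{binomial-coeffs}, under which $\binom{n}{k}=0$ for $n\ge 0$, $k<0$) are correct, so no restriction of the statement is actually needed.
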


\begin{proof}
Both sides count the number of triples $(A,B,C)$ with $A\disun B\disun C=[N]$ and $\abs{A}=K$.  The index of summation $i$ represents $\abs{A\cup B}$.  Note that both sides vanish if $K<0$ or $K>N$.
\end{proof}

We now restate and prove the main theorem on hypercubes.
\medskip

\noindent {\bf Theorem~\ref{thm:cube-tree}.}
\emph{Let $n\geq k\geq 0$.  Then
\[
\hat{\tau}_k(Q_n)=q_{[n]}^{\sum_{i=k-1}^{n-1}\binom{n-1}{i}\binom{i-1}{k-2}}
\prod_{\substack{S\subseteq[n]\\ \abs{S}>k}}
\left(\sum_{i\in S} q_i(y_i+z_i)\prod_{j\in S\sm i}y_jz_j\right)^{\binom{\abs{S}-2}{k-1}}.
\]}

\begin{proof}
If $n=k=0$, then the theorem reduces to the statement $\hat{\tau}_0(Q_0)=1$, which is clear.  Henceforth, we assume $n>0$.

First we rewrite the theorem in a more convenient form.  For $S\subseteq[n]$, let
\[u_S=\sum_{i \in S} q_i\left(\frac{1}{y_i}+\frac{1}{z_i}\right)\]
so that
\[\sum_{i\in S}\Big(q_i(y_i+z_i)\prod_{j\in S\sm i}y_jz_j\Big)=u_Sy_Sz_S.\]
Thus, the theorem is equivalent to the statement that $\hat{\tau}_k(Q_n)=A_kB_kC_k$, where
\begin{align*}
A_k &= q_{[n]}^{\sum_{i=k-1}^{n-1}\binom{n-1}{i}\binom{i-1}{k-2}},&
B_k &= \prod_{\substack{S\subseteq[n]\\ \abs{S}>k}}u_S^{\binom{\abs{S}-2}{k-1}},&
C_k &= \prod_{\substack{S\subseteq[n]\\ \abs{S}>k}}\left(y_S z_S\right)_\PHX^{\binom{\abs{S}-2}{k-1}}.
\end{align*}

We proceed by induction on $k$.  The base case is $k=0$.  For the inductive step, we know by Theorem~\ref{thm:CMTT} that $\hat{\tau}_k\hat{\tau}_{k-1}=\hat{\pi}_{k} X_{(k-1)}$ for all $k>0$, where $X_{(k-1)}=\prod_{\dim \sigma = k-1} X_{\sigma}$, as before.  (Note that the torsion factor $\abs{\HH_{k-2}(\Delta)}^2$ vanishes by~\eqref{Qn-no-torsion}.)  We will show that  $A_kB_kC_k$ satisfies the same recurrence, i.e., that
\begin{equation} \label{want-t-recurrence}
A_kB_kC_kA_{k-1}B_{k-1}C_{k-1}=\hat{\pi}_{k}X_{(k-1)}.
\end{equation}

{\bf Base case:} Let $k=0$.  First, $\binom{i-1}{-2}=0$ for all $i>0$, and $\binom{n-1}{-1}=0$ for $n\geq 1$, so the only non-vanishing summand in the exponent of $A_0$ is the $i=0$ summand, namely $\binom{n-1}{0}\binom{-1}{-2}=-1$, giving $A_0 = (q_1\cdots q_n)^{-1}$.

Second, if $S\subseteq[n]$ is a nonempty set then $\binom{\abs{S}-2}{-1}=0$ unless $\abs{S}=1$.  So the surviving factors in $B_0$ and $C_0$ correspond to the singleton subsets of~$[n]$, giving
\[A_0B_0C_0 = q_{[n]}^{-1} \prod_{j\in[n]} q_j\left(\frac{1}{y_j}+\frac{1}{z_j}\right)y_jz_j = \prod_{j\in[n]} (y_j+z_j)\]
which is indeed $\hat{\tau}_0(Q_n)$  (= $\hat{\pi}_0(Q_n)$).

{\bf Inductive step:} We now assume that $k>0$.  First, the exponent on $q_{[n]}$ in $A_kA_{k-1}$ is
\begin{align}
\sum_{i=k-1}^{n-1}\binom{n-1}{i}&\binom{i-1}{k-2}+
\sum_{i=k-2}^{n-1}\binom{n-1}{i}\binom{i-1}{k-3} \notag\\
=~& \binom{n-1}{k-2}+\sum_{i=k-1}^{n-1}\binom{n-1}{i}\left[\binom{i-1}{k-2}+\binom{i-1}{k-3}\right] \notag\\
=~& \binom{n-1}{k-2}+\sum_{i=k-1}^{n-1}\binom{n-1}{i}\binom{i}{k-2} \notag\\
=~& \sum_{i=k-2}^{n-1}\binom{n-1}{i}\binom{i}{k-2}\notag\\
=~& \binom{n-1}{k-2} 2^{n-k+1} \label{A-result}
\end{align}
by Lemma~\ref{standardized-identity} with $N=n-1$ and $K=k-2$.  The second equality uses the Pascal recurrence.  Note that $(i,k)=(0,2)$, the one instance where the Pascal recurrence is invalid, does not occur.

Second, we calculate
\begin{align}
B_kB_{k-1} &= 
\prod_{\substack{S\subseteq[n]\\ \abs{S}>k}}u_S^{\binom{\abs{S}-2}{k-1}}\ \ 
\prod_{\substack{S\subseteq[n]\\ \abs{S}>k-1}}u_S^{\binom{\abs{S}-2}{k-2}}\notag\\
&=
\prod_{\substack{S\subseteq[n]\\ \abs{S}>k}}u_S^{\binom{\abs{S}-2}{k-1}+\binom{\abs{S}-2}{k-2}}\ \ 
\prod_{\substack{S\subseteq[n]\\ \abs{S}=k}}u_S\notag\\
&=
\prod_{\substack{S\subseteq[n]\\ \abs{S}>k}}u_S^{\binom{\abs{S}-1}{k-1}}\ \ 
\prod_{\substack{S\subseteq[n]\\ \abs{S}=k}}u_S\notag\\
&=
\prod_{\substack{S\subseteq[n]\\ \abs{S}\geq k}}u_S^{\binom{\abs{S}-1}{k-1}}. \label{B-result}
\end{align}
Again, the second equality uses the Pascal recurrence, and the exception $\abs{S}=k=1$ does not occur.

Third, we consider $C_kC_{k-1}$.  This is a monomial that is symmetric in the variables $y_1,\dots,y_n,z_1,\dots,z_n$, so it is sufficient to calculate the exponent of~$y_n$, which is
\begin{align}
\sum_{\substack{S\subseteq[n]:\\ n\in S,\ \abs{S}>k}}\!\!\binom{\abs{S}-2}{k-1} ~&+\!\!\!
\sum_{\substack{S\subseteq[n]:\\ n\in S,\ \abs{S}>k-1}}\binom{\abs{S}-2}{k-2} \notag\\
=~& \sum_{\substack{S\subseteq[n]:\\ n\in S,\ \abs{S}=k}}\binom{k-2}{k-2} +
\sum_{\substack{S\subseteq[n]:\\ n\in S,\ \abs{S}>k}}\left[\binom{\abs{S}-2}{k-2}+\binom{\abs{S}-2}{k-1}\right] \notag\\
=~& \binom{n-1}{k-1} +
\sum_{\substack{S\subseteq[n]:\\ n\in S,\ \abs{S}>k}}\binom{\abs{S}-1}{k-1} \notag\\
=~& \binom{n-1}{k-1} + \sum_{i=k}^{n-1}\binom{i}{k-1} \binom{n-1}{i} 
\qquad\text{(setting $i=\abs{S}-1$)}\notag\\
=~& \sum_{i=k-1}^{n-1}\binom{i}{k-1} \binom{n-1}{i} \notag\\
=~& \binom{n-1}{k-1} 2^{n-k} \label{C-result}
\end{align}
by Lemma~\ref{standardized-identity} with $N=n-1$ and $K=k-1$.  Here again we have used the Pascal recurrence in the third step, and the exception $\abs{S}=k=1$ does not occur.  Putting together~\eqref{A-result},~\eqref{B-result} and~\eqref{C-result} gives
\begin{align*}
A_kA_{k-1}B_kB_{k-1}C_kC_{k-1} &~=~
q_{[n]}^{\binom{n-1}{k-2} 2^{n-k+1}}
\prod_{\substack{S\subseteq[n]\\ \abs{S}\geq k}}u_S^{\binom{\abs{S}-1}{k-1}} \ \ 
(y_{[n]}z_{[n]})_\PHX^{\binom{n-1}{k-1} 2^{n-k}}.
\end{align*}

Now we consider the right-hand side of the desired equality~\eqref{want-t-recurrence}.   By~\cite[Theorem 4.2]{DKM2}, the eigenvalues of $\Lwud_{k-1}(Q_n)$ are $\{u_S\st S\subseteq[n],\ \abs{S}\geq k\}$, each occurring with multiplicity $\binom{\abs{S}-1}{k-1}$.  Therefore
\begin{equation} \label{pihat-k}
\hat{\pi}_k=\prod_{\substack{S \subseteq [n] \\ \abs{S}\geq k}} u_S^{\binom{\abs{S}-1}{k-1}}=B_kB_{k-1}.
\end{equation}
Meanwhile, by a simple combinatorial calculation (which we omit), we have
\begin{align}
X_{(k-1)}
~&=~ \prod_{\substack{A\disun B \disun C = [n]\\\abs{A}=k-1}}   q_A y_B z_C 
\ \ = \ \ 
q_{[n]}^{\binom{n-1}{k-2} 2^{n-k+1}} (y_{[n]}z_{[n]})^{\binom{n-1}{k-1} 2^{n-k}} \notag\\
~&=~ A_kA_{k-1}C_kC_{k-1}  \label{product-kfaces}
\end{align}
by~\eqref{A-result} and~\eqref{C-result}.  Now combining~\eqref{product-kfaces} with~\eqref{pihat-k} gives
\[\hat{\pi}_k X_{(k-1)} =  A_kA_{k-1}B_kB_{k-1}C_kC_{k-1}
\]
establishing~\eqref{want-t-recurrence} as desired.
\end{proof}

\section{Another tree count for hypercubes}\label{sec-twoinvariants}

In this section, we present a very different way of calculating the weighted tree counts $\hat{\tau}_k(Q_n)$ of a hypercube $Q_{n}$, building on the previous work of Kook and Lee~\cite{KL15}. This gives an interpretation of some of the exponents in the formula in terms of unsigned reduced Euler characteristics. Section~\ref{sec-E} introduces a combinatorial formula for the reduced Euler characteristic of the skeleton $Q_n^{(k)}$, and Section~\ref{sec-L} presents a logarithmic approach to obtain a formula for $\hat{\tau}_k(Q_n)$. 

\subsection{Reduced Euler characteristics for a hypercube}\label{sec-E}
For $k \in [0,n]$, let $(Q_n)_k$ denote the set of all $k$-cells in $Q_n$, and note that $\abs{(Q_n)_k}=2^{n-k}\binom{n}{k}$. Define $(Q_n)_{-1}$ to be a set with one element. Then we have 
\begin{equation} \label{euler-char-formula}
\tilde{\chi}{(Q_n^{(k)})}=\sum_{i=-1}^{k}{(-1)^{i}}{\abs{(Q_n)_{i}}}=\sum_{i=0}^{k}(-1)^{i}2^{n-i}\binom{n}{i}-1.
\end{equation}
Note that since $Q_{n}$ is acyclic, the reduced homology groups of $Q_{n}^{(k)}$ vanish except in dimension $k$, and 
\begin{equation} \label{euler-char-sign}
\abs{\tilde{\chi}({Q_{n}^{(k)}})}=(-1)^{k}\tilde{\chi}({Q_{n}^{(k)}})=\rk_{\Zz}{\tilde{H}_{k}(Q_{n}^{(k)})}.
\end{equation} 
We now establish a combinatorial interpretation of the \emph{unsigned} Euler characteristic $\abs{\tilde{\chi}{(Q_n^{(k)})}}$, as stated in the introduction. 
\medskip

\begin{proposition} \label{thm-Euler}
Let $k \in [0,n]$.  Then
\[\abs{\tilde{\chi}({Q_{n}^{(k)}})}=\sum_{i=k+1}^{n}{\binom{n}{i}\binom{i-1}{k} }\]
and consequently (combining with~\eqref{euler-char-formula})
\[
\sum_{i=0}^{k}(-1)^{i}2^{n-i}\binom{n}{i}-1=(-1)^{k}\sum_{i=k+1}^n\binom{n}{i}\binom{i-1}{k}.
\]
\end{proposition}

\begin{proof}
For $k \in [0,n]$, define $A_{n,k}$ to be the subcomplex of $Q_{n}^{(k)}$ generated by the $k$-cells $\sigma=(\sigma_1,\dots,\sigma_{n})$ with the following property: if $\sigma_j=1$ for any $j$, then there is some $i<j$ for which $\sigma_i=[0,1]$.  Then the number of $k$-cells in $Q_{n}^{(k)}\setminus A_{n,k}$ is 
$\sum_{i=k+1}^{n}{\binom{n}{i}\binom{i-1}{k}}$ (here $i$ represents the number of coordinates of~$\sigma$ that are not $0$; the leftmost of these must be~1).

We will show that $A_{n,k}$ is acyclic and contains $Q_{n}^{(k-1)}$, from which the formula for $\abs{\tilde{\chi}({Q_{n}^{(k)}})}$ will immediately follow.  We proceed by induction on $n$. Note that $A_{n,n}=Q_{n}^{(n)}=Q_n$ and $A_{n,0}=\{(0,\dots,0)\}$ have these properties for any $n$, which covers the base step. Now, note that $A_{n,k}$ can be defined recursively as follows (refer to Figure~\ref{A31A32}):
\[
A_{n,k}=B \cup C \cup D
\]
for $k \in [1,n-1]$, where $B=A_{n-1,k} \times 0$, $C=A_{n-1,k} \times 1$, and $D=A_{n-1,k-1} \times [0,1]$. Since all of $B$, $D$, and $B\cap D=A_{n-1,k-1}\times 0$ are acyclic by induction,  $B\cup D$ is also acyclic. Similarly, $C\cup D$ is acyclic. Since $(B\cup D)\cap(C\cup D)=D$ is acyclic, we conclude that $A_{n,k}$ is acyclic.  Also by induction, $B\cup C$ contains all $(k-1)$-cells in $Q_{n}$ whose last coordinate is $0$ or $1$, and $D$ contains those whose last coordinate is $[0,1]$. 
Hence, $A_{n,k}$ contains ${Q_{n}^{(k-1)}}$.
\end{proof}

\begin{figure}[h]
\centering
\raisebox{15pt}{(a)}\includegraphics[scale=0.35]{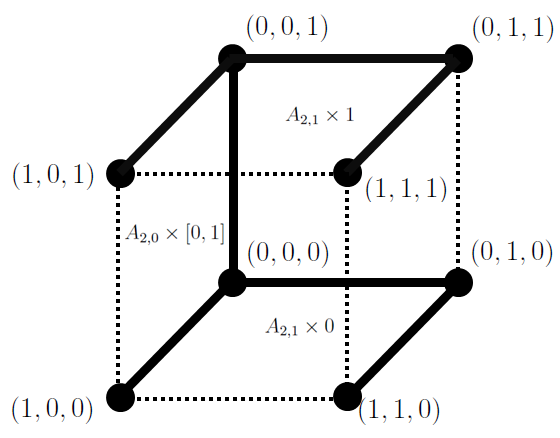}\hfill
\raisebox{15pt}{(b)}\includegraphics[scale=0.35]{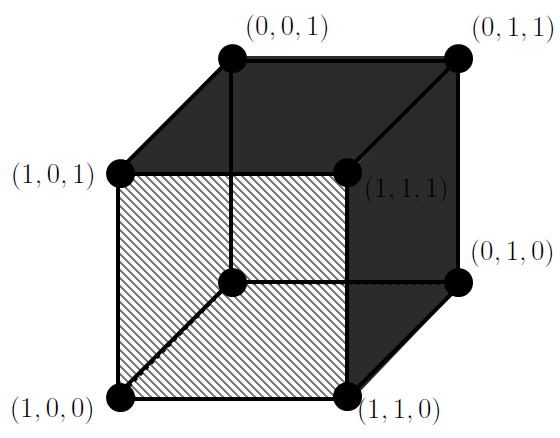}
\caption{(a) The complex generated by bold edges is $A_{3,1}$ and the set of dashed edges is $(Q_3)_{1}\setminus A_{3,1}$. (b) The complex generated by the five shaded $2$-cells is $A_{3,2}$, and the meshed $2$-cell is $(Q_3)_{2}\setminus A_{3,2}$. \label{A31A32}}
\end{figure}

\subsection{Interpretation of $\hat{\tau}_k(Q_n)$ via a logarithmic generating function} \label{sec-L}

We will give a logarithmic generating function for the weighted tree counts of the hypercube $Q_n$.  Logarithmic generating functions for unweighted tree counts of an acyclic cell complex were given in~\cite[Theorem~8]{KK}, and for weighted tree counts of a $\Zz$-APC simplicial complex in~\cite[Theorem~5]{KL15}.  In order to focus on the multiplicities of Laplacian eigenvalues, we will adopt a \emph{formal} logarithm satisfying the rule
\[
\log(ab)=\log a+\log b
\] 
where $a,b$ are nonzero elements of some field $\mathbb{F}$.

As in the case for complete colorful complexes \eqref{from-CMTT}, Theorem \ref{thm:CMTT} and Proposition \ref{spectrum-identities} imply the following identity for $k \in [-1,n]$:
\[
(\det \hat{L}^{\rm tot}_k)\left(\prod_{\sigma \in (Q_n)_{k-1}}{X_\sigma}\right)\left(\prod_{{\sigma'} \in (Q_n)_{k}}{X_{\sigma'}}\right)=  \hat{\tau}_{k-1}\hat{\tau}_k^2\hat{\tau}_{k+1}
\]
where $\hat{\tau}_{-2}=\hat{\tau}_{-1}=\hat{\tau}_{n+1}=1$. Since $Q_n$ is acyclic, we have $\det \hat{L}^{\rm tot}_k\ne 0$ for $k \in [-1, n]$ by combinatorial Hodge theory \cite{Fr}, and therefore $(\det \hat{L}^{\rm tot}_k) = \hat{\pi}_{k+1} \hat{\pi}_k$ by Proposition~\ref{spectrum-identities}.  Applying the formal logarithm to this identity, we obtain
\begin{equation}\label{logthm4.2}
\hat{\omega}_{k}+\log\left(\prod_{\sigma\in (Q_n)_{k-1}}X_{\sigma}\right) +\log\left(\prod_{\sigma'\in (Q_n)_{k}}X_{\sigma'}\right)=\hat{\kappa}_{k-1}+2\hat{\kappa}_{k}+\hat{\kappa}_{k+1}
\end{equation}
where $\hat{\omega}_{k}=\log\det \hat{L}^{\rm tot}_k$ and $\hat{\kappa}_{k}=\log\hat{\tau}_{k}$. Define 
\[
\hat{D}(x)=\sum_{k=-1}^{n}\hat{\omega}_k x^{k+1}\mbox{\quad and \quad}
\hat{K}(x)=\sum_{k=0}^{n}{\hat{\kappa}_k x^k}\, .
\]

We will establish a relationship between $\hat{D}(x)$ and $\hat{K}(x)$ as follows. Let $k\in[0,n]$ and fix $i \in [n]$. Define $\nu_{k}=\abs{\{\sigma \in (Q_n)_{k}\st \sigma_i=0\}}=\abs{\{\sigma \in (Q_n)_{k}\st \sigma_i=1\}}$ and $\epsilon_{k}=\abs{\{\sigma \in (Q_n)_{k}\st \sigma_i=[0,1]\}}$, so that
\[
\nu_{k}=\binom{n-1}{k}2^{n-1-k} \mbox{\quad and \quad} 
\epsilon_{k}=\binom{n-1}{k-1}2^{n-k}\,.
\]
Note that $\nu_{n}=\epsilon_{0}=0$.
Now define 
\[
F_{\nu}(x)=\sum_{k=0}^{n}{\nu_k x^{k+1}}\mbox{\quad and \quad}
F_{\epsilon}(x)=\sum_{k=0}^{n}{\epsilon_k x^{k+1}}\, .
\]

\begin{proposition}\label{genecube}  
We have
\begin{equation}\label{geneftncube}
\hat{K}(x)=(1+x)^{-2}\hat{D}(x)+(1+x)^{-1}\big(\log(y_{[n]}z_{[n]}) F_{\nu}(x)+\log(q_{[n]}) F_{\epsilon}(x)\big).
\end{equation}
\end{proposition}

\begin{proof} For $k\in[0,n]$, note that 
$\prod_{\sigma' \in (Q_n)_{k}}{X_{\sigma'}}=\prod_{i \in [n]}{(y_iz_i)}^{\nu_{k}}\prod_{i\in[n]}{q_i}^{\epsilon_{k}}$, which follows directly from the definitions of $\nu_{k}$ and $\epsilon_{k}$. Applying the formal logarithm to this identity, we get $\log(\prod_{\sigma' \in (Q_n)_{k}}{X_{\sigma'}})=\nu_{k}\log(y_{[n]}z_{[n]})+\epsilon_{k}\log(q_{[n]})$. Therefore, for $k\in[-1,n]$, we obtain from \eqref{logthm4.2}
\[
\hat{\kappa}_{k-1}+2\hat{\kappa}_{k}+\hat{\kappa}_{k+1}
=\hat{\omega}_{k}+(\nu_{k-1}+\nu_{k})\log(y_{[n]}z_{[n]})+(\epsilon_{k-1}+\epsilon_{k})\log(q_{[n]})
\]
where $\hat{\kappa}_{k}, \nu_{k}$, and $\epsilon_{k}$ are zero for $k\notin[0,n]$. Multiplying both sides by $x^{k+1}$ and summing over $k\in[-1,n+1]$ gives 
\[
(1+x)^{2}\hat{K}(x)=\hat{D}(x)+(1+x)(\log(y_{[n]}z_{[n]}) F_{\nu}(x)+\log(q_{[n]}) F_{\epsilon}(x))\, ,
\]
and the result follows. 
\end{proof}

We now restate the main result of this part of the paper.
\medskip

\noindent {\bf Theorem~\ref{inter-hyper-theorem}.}  \emph{For $k \in [n]$, the $k$-th weighted tree count of $Q_n$ is
\begin{equation}\label{formula-cube}
\hat{\tau}_k :=
\hat{\tau}_k(Q_n) =
\big(q_{[n]}\big)^{\abs{\tilde{\chi}({Q_{n-1}^{(k-2)}})}}\big(y_{[n]}z_{[n]}\big)^{\abs{\tilde{\chi}({Q_{n-1}^{(k-1)}})}} \prod_{\substack{S\subseteq [n] \\ \abs{S}>k}}{u_S}^{{\binom{\abs{S}-2}{k-1}}}
\end{equation}
where \(u_S=\sum_{i \in S} q_i\left(y_i^{-1}+z_i^{-1}\right).\) Moreover, $\hat{\tau}_0:=\hat{\tau}_0(Q_n)=\prod_{i \in [n]}{(y_i+z_i)}$.}
\medskip

\begin{proof}
First, we reformulate the right-hand side of \eqref{geneftncube}.  Define $\hat{D}_S=\sum_{k=0}^{\abs{S}}{\binom{\abs{S}}{k}x^{k+1}}=(1+x)^{\abs{S}}x$ for $S \subseteq [n]$ with $S \ne \emptyset$. Note that for $i \in [n]$, we have $\hat{D}_{\{i\}}=(1+x)x$ and $\log(u_i)=\log(y_i+z_i)+\log(q_i)-\log(y_iz_i)$. By \cite[Theorem~4.2]{DKM2}, for the weighted total Laplacians $\hat{L}^{\rm tot}_k$, we have

\begin{itemize}
\item $\det \hat{L}^{\rm tot}_{-1}=\prod_{i \in [n]}{(y_i+z_i)}=\hat{\tau}_{0}$,
\item $\det \hat{L}^{\rm tot}_0=\big( \prod_{i \in [n]}{(y_i+z_i)}\big)\big(\prod_{S\subseteq [n] }u_S\big)$, and
\item $\det \hat{L}^{\rm tot}_k=\big(\prod_{S\subseteq [n]}u_S^{\binom{\abs{S}}{k}}\big) \mbox{ for } k \in [1,n]$,
\end{itemize}
where we define $u_{\emptyset}:=1$. Therefore,
\begin{align*}
\hat{D}(x) 
&=(1+x)\sum_{i \in [n]}\log{(y_i+z_i)}+ \sum_{\substack{S \subseteq [n]\\\abs{S} =1}}{(\log u_S)\hat{D}_S} + \sum_{\substack{S \subseteq [n]\\\abs{S} \geq 2}}{(\log u_S)\hat{D}_S}\\
&=(1+x)^2\sum_{i \in [n]}\log{(y_i+z_i)}+\sum_{i \in [n]}{(1+x)x\big(\log(q_i)-\log(y_iz_i)\big)}+\sum_{\substack{S \subseteq [n]\\\abs{S} \geq 2}}{(\log u_S)\hat{D}_S}.
\end{align*}
Applying Proposition~\ref{genecube}, we obtain
\begin{equation}\label{re-K(x)}
\hat{K}(x)=\hat{\kappa}_0+\big(\log(q_{[n]})\big)K_q+\big(\log(y_{[n]}z_{[n]})\big)K_{yz}+ \sum_{\substack{S\subseteq [n]\\ \abs{S} \geq 2}}{(\log u_S)K_S}
\end{equation}
where $K_q=(1+x)^{-1}(x+F_{\epsilon}(x))$, $K_{yz}=(1+x)^{-1}(-x+F_{\nu}(x))$, and $K_S=(1+x)^{-2}\hat{D}_S$ for each $S \subseteq [n]$ with $\abs{S} \geq 2$. Thus, the coefficients of $K_q$, $K_{yz}$, and $K_S$ are the exponents of $q_{[n]}$, $y_{[n]}z_{[n]}$, and $u_S$, respectively, and the proof reduces to showing that these exponents are as claimed in \eqref{formula-cube}.

Let us compute the exponent of $q_{[n]}$ in \eqref{formula-cube}. For $k \in [0,n-1]$, let $c_k$ be the number of $k$-cells in $Q_{n-1}$ and $c_{-1}=1$.  Thus $c_{k-1}=\epsilon_{k}$ for all $k \in [n]$, and $c_{-1}=1+\epsilon_0$.  Observe that
\begin{align*}
K_q &= (1+x)^{-1}(x+F_{\epsilon}(x)) \\
&= (1+x)^{-1}\sum_{k=-1}^{n-1}{c_{k}x^{k+2}} \\
&= \sum_{k=1}^{n}\left(\sum_{\ell=-1}^{k-2}{(-1)^{k-2-\ell}c_{\ell}}\right)x^k=\sum_{k=1}^{n}{\abs{\tilde{\chi}({Q_{n-1}^{(k-2)}})}x^{k}}
\end{align*} 
where the last equality is given by~\eqref{euler-char-sign}.  Hence $\abs{\tilde{\chi}({Q_{n-1}^{(k-2)}})}$ is the exponent of the product $q_{[n]}$ in \eqref{formula-cube}. 

Next, we consider the exponent of $y_{[n]}z_{[n]}$ in \eqref{formula-cube}. Let $c_0'=c_0-1$, and $c'_{k}=c_{k}$ for all $k \in [n]$. By a similar calculation for $K_q$, we have
\begin{align*}
K_{yz} &= (1+x)^{-1}(-x+F_{\nu}(x)) \\
&= (1+x)^{-1}\left(-x+\sum_{k=0}^{n}c_{k}x^{k+1}\right) \\
&= (1+x)^{-1}\left(\sum_{k=0}^{n}c'_{k}x^{k+1}\right) \\
&= \sum_{k=1}^{n}\left(\sum_{\ell=0}^{k-1}{(-1)^{k-1-\ell}c'_{\ell}}\right)x^{k} \\
&= \sum_{k=1}^{n}\left(\sum_{\ell=-1}^{k-1}{(-1)^{k-1-\ell}c_{\ell}}\right)x^{k}=\sum_{k=1}^{n}{\abs{\tilde{\chi}({Q_{n-1}^{(k-1)}})}x^{k}}
\end{align*}
where again the last equality uses~\eqref{euler-char-sign}.  Hence $\abs{\tilde{\chi}({Q_{n-1}^{(k-1)}})}$ is the exponent of the product $y_{[n]}z_{[n]}$ in  \eqref{formula-cube}.

Finally, let $S \subseteq [n]$ with $\abs{S} \geq 2$. Recall that $\hat{D}_S=(1+x)^{\abs{S}}x$. For $k \in [n]$, the coefficient of $x^k$ in $K_S=(1+x)^{-2}\hat{D}_S=(1+x)^{\abs{S}-2}x$ is $\binom{\abs{S}-2}{k-1}$, which is the exponent of $u_S$ in the last product of \eqref{formula-cube}.
\end{proof}

\begin{remark}
Theorem~\ref{inter-hyper-theorem} provides an alternate proof of Theorem~\ref{thm:cube-tree}. Simply applying Proposition~\ref{thm-Euler} gives 
\begin{equation}\label{formula-cube2}
\hat{\tau}_k=\big(q_{[n]}\big)^{\sum_{i=k-1}^{n-1}\binom{n-1}{i}\binom{i-1}{k-2}} \big(y_{[n]}z_{[n]}\big)^{\sum_{i=k}^{n-1}{\binom{n-1}{i}\binom{i-1}{k-1}}} \prod_{\substack{S\subseteq [n] \\ \abs{S}>k}}{u_S}^{\binom{\abs{S}-2}{k-1}}.
\end{equation}
This equation, together with a binomial identity, yields Theorem~\ref{thm:cube-tree}.
\end{remark}

\begin{remark}
We can obtain \eqref{formula-cube2} directly from \eqref{re-K(x)}. First note that
\[
F_{\epsilon}(x)=\sum_{k=1}^{n}\binom{n-1}{k-1}2^{n-k}x^{k+1}=x^2(x+2)^{n-1}=
\sum_{i=0}^{n-1}\binom{n-1}{i}(1+x)^{i}x^2.
\]
(This identity can be obtained either combinatorially, by counting the faces of $Q_{n-1}$ in two different ways, or algebraically, by equating two different binomial expansions of $x^2(x+2)^{n-1}$.)  It follows that $K_q=x+\sum_{i=1}^{n-1}\binom{n-1}{i}(1+x)^{i-1}x^2$. Hence, for $k \in [n]$, the coefficient of $x^{k}$ in $K_q$ is $\sum_{i=k-1}^{n-1}\binom{n-1}{i}\binom{i-1}{k-2}$, which is the exponent of $q_{[n]}$ in~\eqref{formula-cube2}.  Similarly, since $F_{\nu}(x)=\sum_{i=0}^{n-1}\binom{n-1}{i}(1+x)^{i}x$, we obtain $K_{yz}=\sum_{i=1}^{n-1}\binom{n-1}{i}(1+x)^{i-1}x$.  For $k \in [n]$, the coefficient of $x^{k}$ in $K_{yz}$ is $\sum_{i=k}^{n-1}\binom{n-1}{i}\binom{i-1}{k-1}$, 
which is the exponent of $y_{[n]}z_{[n]}$ in~\eqref{formula-cube2}. 
\end{remark}

\bibliographystyle{alpha}
\bibliography{biblio}
\end{document}